\newtheoremstyle{theorem}	
   {}       			
   {}      				
   {\itshape}  				
   {\parindent} 			
   {\bfseries} 				
   {}         				
   {.7em}      				
   {}          				
\newtheoremstyle{definition}
  {}       				
  {}      				
  {} 		 				
  {\parindent} 				
  {\bfseries} 				
  {}         				
  {.7em}      				
  {}          				
\newtheoremstyle{remark}
  {}       			
  {}      			
  {} 					
  {\parindent} 			
  {\itshape} 			
  {.}         			
  {.7em}      			
  {}          			
\theoremstyle{theorem}
\newtheorem{theorem}{Theorem}[section]
\newtheorem{corollary}[theorem]{Corollary}
\newtheorem{lemma}[theorem]{Lemma}
\theoremstyle{remark}
\newtheorem{remark}[theorem]{Remark}
\theoremstyle{definition}
\newtheorem{definition}[theorem]{Definition}
\newcommand{\Bb}{\mathbb{B}}
\newcommand{\Ebb}{\mathbb{E}}
\newcommand{\Fbb}{\mathbb{F}}
\newcommand{\Hbb}{\mathbb{H}}
\newcommand{\Nbb}{\mathbb{N}}
\newcommand{\Pbb}{\mathbb{P}}
\newcommand{\Qbb}{\mathbb{Q}}
\newcommand{\Rbb}{\mathbb{R}}
\newcommand{\Bcal}{\mathcal{B}}
\newcommand{\Ccal}{\mathcal{C}}
\newcommand{\Ecal}{\mathcal{E}}
\newcommand{\Fcal}{\mathcal{F}}
\newcommand{\Hcal}{\mathcal{H}}
\newcommand{\Kcal}{\mathcal{K}}
\newcommand{\Mcal}{\mathcal{M}}
\newcommand{\Pcal}{\mathcal{P}}
\newcommand{\Scal}{\mathcal{S}}
\newcommand{\Xcal}{\mathcal{X}}
\newcommand{\Ycal}{\mathcal{Y}}
\newcommand{\Kfrak}{\mathbf{K}}
\newcommand{\kfrak}{\mathfrak{K}}
\DeclareMathOperator*{\Var}{Var}
\newcommand{\Ep}[2]{\Ebb\negthinspace\left[\left\vert #1 \right\vert^{#2} \right]^{\frac{1}{#2}}\negthinspace}
\newcommand{\EpO}[2]{\Ebb\negthinspace\left[\left\vert #1 \right\vert^{#2} \right]\negthinspace}
\newcommand{\Eabs}[1]{\Ebb\negthinspace\left[\left\vert #1 \right\vert \right]\negthinspace}
\newcommand{\EW}[1]{\Ebb\negthinspace\left[ #1 \right]\negthinspace}
\newcommand{\BL}{\text{BL}}
\newcommand{\Lip}{\text{Lip}}
\begin{document}


\title[MKV equations on infinite-dimensional Hilbert spaces]{McKean-Vlasov equations on infinite-dimensional Hilbert spaces with irregular drift and additive fractional noise}
\author[M.Bauer]{Martin Bauer}
\address{M. Bauer: Department of Mathematics, LMU, Theresienstr. 39, D-80333 Munich, Germany.}
\email{bauer@math.lmu.de}
\author[T. Meyer-Brandis]{Thilo Meyer-Brandis}
\address{T. Meyer-Brandis: Department of Mathematics, LMU, Theresienstr. 39, D-80333 Munich, Germany.}
\email{meyerbra@math.lmu.de}
\date{\today}
\maketitle


\begin{center}
\parbox{13cm}{
\begin{footnotesize}
\textbf{\textsc{Abstract.}} This paper establishes results on the existence and uniqueness of solutions to McKean-Vlasov equations, also called mean-field stochastic differential equations, in an infinite-dimensional Hilbert space setting with irregular drift. Here, McKean-Vlasov equations with additive noise are considered where the driving noise is cylindrical (fractional) Brownian motion. The existence and uniqueness of weak solutions are established for drift coefficients that are merely measurable, bounded, and continuous in the law variable. In particular, the drift coefficient is allowed to be singular in the spatial variable. Further, 
we discuss existence of a pathwisely unique strong solution as well as Malliavin differentiability.\\[0.2cm]
\textbf{\textsc{Keywords.}} McKean-Vlasov equation $\cdot$ mean-field stochastic differential equation $\cdot$ weak solution $\cdot$ strong solution $\cdot$ uniqueness in law $\cdot$ pathwise uniqueness $\cdot$ singular coefficients $\cdot$ fractional Brownian motion $\cdot$ fractional calculus $\cdot$ Malliavin derivative.
\end{footnotesize}
}
\end{center}

\section{Introduction}\label{sec:introduction}

	Throughout the paper let $T>0$ be a finite time horizon and let $(\Omega, \Fcal, \Fbb, \Pbb)$ be a complete filtered probability space. McKean-Vlasov (for short MKV) equations, also called mean-field stochastic differential equations, are an extension of stochastic differential equations, where the coefficients in addition to time and space are depending on the law of the solution. More precisely, a finite-dimensional McKean-Vlasov equation is commonly defined as
	\begin{align}\label{eq:MFSDEgeneral}
		dX_t = b\left(t, X_t, \Pbb_{X_t} \right) dt + \sigma\left(t, X_t, \Pbb_{X_t} \right) dB_t,~t\in [0,T],~X_0 = x \in \Rbb^d,
	\end{align}
	where $b:[0,T] \times \Rbb^d \times \Pcal_1(\Rbb^d) \to \Rbb^d$ and $\sigma:[0,T] \times \Rbb^d \times \Pcal_1(\Rbb^d) \to \Rbb^{d \times n}$ are measurable functions, $\Pcal_1(\Rbb^d)$ is the set of probability measures over $\Rbb^d$ with finite first moment, $(\Pbb_{X_t})_{t\in[0,T]}$ denotes the law of $(X_t)_{t\in[0,T]}$ under the probability measure $\Pbb$, and $B= (B_t)_{t\in[0,t]}$ is $n$-dimensional Brownian motion. 
	
	The field of MKV equations is a research area that currently gains broad attention. Developing historically from the works of Vlasov \cite{Vlasov_VibrationalPropertiesofElectronGas}, Kac \cite{Kac_FoundationsOfKineticTheory}, and McKean \cite{McKean_AClassOfMarkovProcess} on the modeling of particle systems in mathematical physics, an increased interest in MKV equations emerged following the work of Lasry and Lions \cite{LasryLions_MeanFieldGames} who applied the mean-field approach to topics in Economics and Finance. Later Carmona and Delarue transfered this approach on mean-field games to a probabilistic environment, cf. the manuscript \cite{CarmonaDelarue_Book} and the cited sources therein.

	In this paper we extend the finite-dimensional MKV equation \eqref{eq:MFSDEgeneral} to infinite dimensions and further consider cylindrical fractional Brownian motion as additive driving noise, i.e. we look at MKV equations of the form
	\begin{align}\label{eq:mfsde}
		X_t = x + \int_0^t b(s,X_s,\Pbb_{X_s}) ds + \Bb_t, \quad t\in[0,T], \quad x \in \Hcal,
	\end{align}
	on a separable Hilbert space $\Hcal$. Here, $\Bb = ( \Bb_t )_{t\in[0,T]}$ is (weighted) cylindrical fractional Brownian motion defined as
	\begin{align*}
		\Bb_t = \sum_{k\geq 1} \lambda_k B_t^{H_k} e_k, \quad t\in [0,T],
	\end{align*}
	where $\lambda = \lbrace \lambda_k \rbrace_{k \geq 1} \in \ell^1$, $\lbrace e_k \rbrace_{k\geq 1}$ is an orthonormal basis of $\Hcal$, and $\lbrace B^{H_k} \rbrace_{k\geq 1}$ a sequence of independent one-dimensional fractional Brownian motions with Hurst parameters $\Hbb := \lbrace H_k \rbrace_{k\geq 1} \subset (0,1)$. Note that Hurst parameters in the entire range $(0,1)$ are admitted, and we introduce the following partition: $I_- := \lbrace k: H_k \in (0,1/2) \rbrace$, $I_0 := \lbrace k: H_k = 1/2 \rbrace$, and $I_+ := \lbrace k: H_k \in (1/2,1) \rbrace$. The main objective of this paper is to study existence and uniqueness of a solution to the infinite-dimensional MKV equation \eqref{eq:mfsde} for irregular drift coefficients $b$.
	
	 In the literature existence and uniqueness of solutions of the finite-dimensional MKV equation \eqref{eq:MFSDEgeneral} is examined in several papers with respect to various assumptions on the coefficients $b$ and $\sigma$, c.f. \cite{Bauer_RegularityOfMFSDE}, \cite{Bauer_MultiDim}, \cite{Bauer_StrongSolutionsOfMFSDEs}, \cite{BuckdahnDjehicheLiPeng_MFBSDELimitApproach}, \cite{BuckdahnLiPeng_MFBSDEandRelatedPDEs}, \cite{BuckdahnLiPengRainer_MFSDEandAssociatedPDE}, \cite{Chiang_MKVWithDiscontinuousCoefficients}, \cite{de2015strong}, \cite{JourdainMeleardWojbor_NonlinearSDEs}, \cite{LiMin_WeakSolutions}, \cite{mahmudov2006mckean}, and \cite{MishuraVeretennikov_SolutionsOfMKV}. In particular, in \cite{LiMin_WeakSolutions} Li and Min show the existence of a weak solution of a path dependent finite-dimensional MKV equation by the means of Girsanov's theorem and Schauder's fixed point theorem, where they assume that $b$ is merely measurable and bounded as well as continuous in the law variable. Further, uniqueness in law is proven under the additional assumption that $b$ admits a modulus of continuity. Mishura and Veretennikov show in \cite{MishuraVeretennikov_SolutionsOfMKV} inter alia the existence of a pathwise unique strong solution to a finite-dimensional MKV equation \eqref{eq:MFSDEgeneral}, where they assume the drift coefficient $b$ to be merely measurable, of at most linear growth, and continuous in the law variable in the topology of weak convergence. For their proof they use an approximational approach based on techniques applied by Krylov in the theory of stochastic differential equations, cf. \cite{krylov1969ito}. In \cite{Bauer_MultiDim}, we consider MKV equation \eqref{eq:MFSDEgeneral} with additive noise, i.e. $\sigma \equiv 1$, and singular drift coefficients $b$. More precisely, for $b$ being bounded and continuous in the law variable with respect to the Kantorovich-Rubinstein metric, it is shown that there exists a Malliavin differentiable strong solution of MKV equation \eqref{eq:MFSDEgeneral}.  For one-dimensional solutions of \eqref{eq:MFSDEgeneral} we even allow for certain linear growth behavior of the drift in \cite{Bauer_StrongSolutionsOfMFSDEs}.

	Using similar approaches as in \cite{Bauer_MultiDim} and \cite{Bauer_StrongSolutionsOfMFSDEs}, in this paper existence of a weak solution to the infinite-dimensional MKV equation \eqref{eq:mfsde} is established under the assumption that the drift coefficient $b$ is in the space $L^\infty(\Hcal)$, i.e. there exists a sequence $C \in \ell^1$ such that $\Vert b_k \Vert_\infty \leq C_k$ for every $b_k := \langle b, e_k \rangle_\Hcal$, $k\geq 1$, and for $k\in I_+$ the projection of the drift $b_k$ is Hölder continuous, i.e.
	\begin{align*}
		\vert b_k (t,x,\mu) - b_k (s,y,\nu) \vert \leq C_k \left( \vert t-s \vert^{\gamma_k} + \Vert x-y \Vert_\Hcal^{\alpha_k} + \Kcal(\mu, \nu)^{\beta_k} \right),
	\end{align*}
	for suitable constants $C_k, \gamma_k, \alpha_k, \beta_k >0$, and $\Kcal$ denotes the Kantorovich-Rubinstein metric, cf. \eqref{eq:KantorovichMetric}. For $k\in I_-\cup I_0$ it is assumed that the projection $b_k$ is merely continuous with respect to the law variable. More precisely, in order to show existence of a weak solution we first apply Girsanov's theorem to show the existence of a weak solution to the stochastic differential equation, for short SDE,
	\begin{align*}
		dX_t^\mu = b\left(t, X_t^\mu, \mu_t \right)dt + d\Bb_t, ~t\in[0,T], ~X_0 = x \in \Hcal,
	\end{align*}
	where $\mu \in \Ccal([0,T];\Pcal_1(\Hcal))$ is an arbitrary measure process continuous with respect to time. Afterwards Schauder's fixed point theorem \cite{Schauder} is applied to the function 
	\begin{align*}
		\varphi(\mu) = \Pbb_{X_t^\mu}
	\end{align*}
	to show the existence of a fixed point and in particular, to conclude existence of a weak solution to MKV equation \eqref{eq:mfsde}. \par 
	Assuming additionally that the drift coefficient $b$ is Lipschitz continuous in the law variable, it is shown that the solution of the infinite-dimensional MKV equation \eqref{eq:mfsde} is unique in law. In order to show uniqueness in law, we apply similar to \cite{Bauer_MultiDim} and \cite{Bauer_StrongSolutionsOfMFSDEs} Girsanov's theorem and a Grönwall type argument. \par
	Existence of a strong solution to MKV equation \eqref{eq:mfsde} is then a consequence of results on ordinary SDEs. Indeed, we can associate the following SDE to MKV equation \eqref{eq:mfsde}:
	\begin{align}\label{eq:SDEaux}
		dY_t = b^{\Pbb_X}\left(t, Y_t \right) dt + d\Bb_t,~t\in [0,T],~Y_0 = x \in \Hcal,
	\end{align}
	where $b^{\Pbb_X}\left(t, y \right) := b\left(t, y, \Pbb_{X_t} \right)$ and $X$ is a weak solution of MKV equation \eqref{eq:mfsde}. In order to show that \eqref{eq:mfsde} has a strong solution, it suffices to show that there exists a weak solution that is measurable with respect to the filtration generated by the driving noise $\Bb$. Since $X$ is as a weak solution to MKV equation \eqref{eq:mfsde} also a weak solution of SDE \eqref{eq:SDEaux}, it is sufficient to show that every weak solution $Y$ of SDE \eqref{eq:SDEaux} is a strong solution. Furthermore, if MKV equation \eqref{eq:mfsde} has a weakly unique solution, the associated SDE \eqref{eq:SDEaux} is uniquely determined and consequently, pathwise uniqueness of the solution $Y$ of SDE \eqref{eq:SDEaux} implies pathwise uniqueness of the solution $X$ of MKV equation \eqref{eq:mfsde}. Thus, applying existence results on SDEs as for example stated in \cite{banos2019infiniteSDE}, \cite{MeyerBrandisProske_ConstructionOfStrongSolutionsOfSDEs}, \cite{nualart2002regularization}, and \cite{veretennikov1981strong}, yields existence of a (pathwisely unique) strong solution of MKV equation \eqref{eq:mfsde}. Analogously, Malliavin differentiability of the solution to MKV equation \eqref{eq:mfsde} is deduced from results on SDEs, cf. \cite{Bauer_MultiDim} and \cite{Bauer_StrongSolutionsOfMFSDEs}.

	The paper is structured as follows. In \Cref{sec:framework} we give a brief introduction to measure spaces, fractional calculus, and fractional Brownian motion. After introducing the driving noise $\Bb$ and a version of Girsanov's theorem, we present in \Cref{sec:solution} the main results of this paper on existence and uniqueness of a weak solution to the infinite-dimensional MKV equation \eqref{eq:mfsde}. Concluding, existence of a unique strong solution to MKV equation \eqref{eq:mfsde} and Malliavin differentiability are discussed in \Cref{sec:strongSolution}.

\bigskip

\textbf{\underline{Notation:}}
	Subsequently, we give some of the most frequently used notations. Throughout the paper, let $\Hcal$ be a separable Hilbert space with scalar product $\langle \cdot, \cdot \rangle_\Hcal$ and orthonormal basis $\lbrace e_k \rbrace_{k\geq 1} \subset \Hcal$. Denote by $\Vert \cdot \Vert_{\Hcal}$ the induced norm on $\Hcal$ defined by $\Vert x \Vert_{\Hcal} := \langle x,x \rangle_\Hcal^{\frac{1}{2}}$, $x \in \Hcal$. For every $x \in \Hcal$ and $k\geq 1$ we denote by $x^{(k)} := \langle x, e_k \rangle_\Hcal$ the projection onto the subspace spanned by $e_k$. We denote by $b_k: [0,T] \times \Hcal \times \Pcal_1(\Hcal) \to \Rbb$, the projection of $b$ onto the subspace spanned by $e_k$, $k \geq 1$. Furthermore, we assume for technical reasons that without loss of generality $T\geq 1$.

 Let $(\mathcal{X},\Vert \cdot \Vert_{\mathcal{X}}), (\mathcal{Y},\Vert \cdot \Vert_{\mathcal{Y}})$ be two normed spaces.
\begin{itemize}
\item $L^p(\Xcal; \Ycal)$ denotes the space of functions $f: \Xcal \to \Ycal$ with existing $p$-th moment, i.e.
\begin{align*}
	\int_{\Xcal} \Vert f(x) \Vert_{\Ycal}^p dx < \infty.
\end{align*}
	If $\Xcal = [a,b]$ is an interval on the real line and $\Ycal = \Rbb$, we write $L^p[a,b]$.
\item $\Ccal^\kappa([0,T]; \Xcal)$, $\kappa >0$, is defined as the space of $\kappa$-Hölder continuous functions $f: [0,T] \to \Xcal$, i.e. for all $t,s \in [0,T]$
\begin{align*}
	\Vert f(t) - f(s) \Vert_\Xcal \leq \vert t-s \vert^\kappa.
\end{align*}
\item We denote by $\Lip_C(\Xcal; \Ycal)$, $C>0$ the space of $C$-Lipschitz continuous functions $f: \Xcal \to \Ycal$, i.e. for all $x_1,x_2 \in \mathcal{X}$
\begin{align*}
	\Vert f(x_1) - f(x_2) \Vert_\Ycal \leq C \Vert x_1,x_2 \Vert_\Xcal.
\end{align*}
\item For a function $f:\Xcal \to \Ycal$ define $\Vert f \Vert_{\Lip} := \inf \lbrace C>0 : f \in \Lip_C(\mathcal{X}; \mathcal{Y})\rbrace$ and $\Vert f \Vert_{\infty} :=  \sup_{x\in \Xcal} \Vert f(x) \Vert_{\Ycal}$. We define the bounded Lipschitz norm of $f$ as $\Vert f \Vert_{\BL} := \Vert f \Vert_{\infty} + \Vert f \Vert_{\Lip}$. We say $f \in \BL(\Xcal;\Ycal)$, if $\Vert f \Vert_\BL \leq 1$.
\item The Beta function $\beta$ is defined by
\begin{align*}
	\beta(x,y) = \int_0^1 t^{x-1} (1-t)^{y-1} dt.
\end{align*}
\item The Gamma function $\Gamma$ is defined by
\begin{align*}
	\Gamma(x) = \int_0^\infty t^{x-1} e^{-t} dt.
\end{align*}
\item We write $E_1(\theta) \lesssim E_2(\theta)$ for two mathematical expressions $E_1(\theta),E_2(\theta)$ depending on some parameter $\theta$, if there exists a constant $C>0$ not depending on $\theta$ such that $E_1(\theta) \leq C E_2(\theta)$.
\item Let $C= \lbrace C_k \rbrace_{k\geq 1}$ and $D= \lbrace D_k \rbrace_{k\geq 1}$ be two sequences. Then, we denote $\frac{C}{D} := \lbrace \frac{C_k}{D_k} \rbrace_{k\geq 1}$.
\end{itemize}

\section{Framework}\label{sec:framework}

\subsection{Measure Spaces}\label{sec:measureSpaces}

	For a general introduction to (probability) measures on metric spaces we refer the reader e.g. to \cite{araujo1980central}. Let $(\Scal,d)$ be a complete separable metric space, in particular, $(\Scal,d)$ is a Radon space. We define the space $\Mcal(\Scal)$ as the space of finite signed Radon measures on $(\Scal, \Bcal(\Scal))$, where $\Bcal(\Scal)$ is the Borel-$\sigma$-algebra on $\Scal$. Moreover, let
	\begin{align*}
		\Mcal_p(\Scal) := \left\lbrace \mu \in \Mcal(\Scal) : \int_{\Scal} d(x,x_0)^p \vert \mu \vert(dx) < \infty \text{ for some } x_0 \in \Scal \right\rbrace,
	\end{align*}
	be the set of finite signed Radon measures over $(\Scal, \Bcal(\Scal))$ with finite $p$-th moment. $\Mcal_1(\Scal)$ equipped with the \emph{Kantorovich norm} $\Vert \cdot \Vert_{\Kcal}$, also called \emph{dual bounded Lipschitz norm}, defined by
	\begin{align*}
		\Vert \mu \Vert_{\Kcal} := \sup \left\lbrace \int_{\Scal} f(x) \mu(dx) :  \Vert f \Vert_{\BL} \leq 1 \right\rbrace, \quad \mu \in \Mcal_1(\Scal),
	\end{align*}
	defines a separable Banach space. Analogously, define the according Kantorovich-Rubinstein metric $\Kcal$ by 
	\begin{align}\label{eq:KantorovichMetric}
		\Kcal(\mu,\nu) := \Vert \mu - \nu \Vert_{\Kcal}, \quad \mu,\nu \in \Mcal_1(\Scal).
	\end{align}
	Let $\Pcal_p(\Scal) \subset \Mcal_p(\Scal)$ be the set of probability measures over $(\Scal, \Bcal(\Scal))$ such that the $p$-th moment exists, i.e.
	\begin{align*}
		\Pcal_p(\Scal) := \left\lbrace \mu \in \Mcal_p(\Scal) : \mu(\Scal) = 1 \text{ and } \mu(A) \geq 0 \text{ for all } A \in \Bcal(\Scal) \right\rbrace.
	\end{align*}
	Lastly, define the set of continuous functions $\Ccal([0,T];\Mcal_1(\Scal))$ from the time interval $[0,T]$ to the space $\Mcal_1(\Scal)$ and equip it with the norm $\Vert \mu \Vert_{\Kcal^*} := \sup_{t\in [0,T]} \Vert \mu_t \Vert_{\Kcal}$, $\mu \in \Ccal([0,T];\Mcal_1(\Scal))$. It can be shown that $(\Ccal([0,T];\Mcal_1(\Scal)),\Vert \cdot \Vert_{\Kcal^*})$ is a linear separable Banach space.

\subsection{Fractional Calculus}\label{sec:fractionalCalculus}

	We give some basic definitions and properties on fractional calculus. For a general theory on this subject we refer the reader to \cite{oldham1974fractional}.

Let $f \in L^p [a,b]$ for some real numbers $a < b$, where $p\geq 1$, and let $\alpha>0$. The \emph{left--sided Riemann--Liouville fractional integral} is defined for almost all $x\in [a,b]$ by
\begin{align*}
	I_{a^+}^\alpha f(x) = \frac{1}{\Gamma (\alpha)} \int_a^x (x-y)^{\alpha-1}f(y)dy.
\end{align*}
Moreover, we denote by $I_{a^+}^{\alpha} (L^p[a,b])$ the image of $L^p[a,b]$ by the operator $I_{a^+}^\alpha$.

For $g \in I_{a^+}^{\alpha} (L^p[a,b])$ and $0<\alpha<1$, the \emph{left--sided Riemann--Liouville fractional derivative} is defined by
\begin{align}\label{eq:leftSidedFractionalDerivative}
	D_{a^+}^{\alpha} g(x)= \frac{1}{\Gamma (1-\alpha)} \frac{\partial}{\partial x} \int_a^x \frac{g(y)}{(x-y)^{\alpha}}dy.
\end{align}
The left--sided derivative of $g$ defined in \eqref{eq:leftSidedFractionalDerivative} can further be written as
\begin{align*}
	D_{a^+}^{\alpha} g(x)= \frac{1}{\Gamma (1-\alpha)} \left(\frac{g(x)}{(x-a)^\alpha}+\alpha\int_a^x \frac{g(x)-g(y)}{(x-y)^{\alpha+1}}dy\right).
\end{align*}

Similar to the fundamental theorem of calculus the following formulas hold
\begin{align*}
	I_{a^+}^\alpha (D_{a^+}^{\alpha} f) = f
\end{align*}
for all $f\in I_{a^+}^{\alpha} (L^p[a,b])$ and
\begin{align*}
	D_{a^+}^{\alpha}(I_{a^+}^\alpha  f) = f
\end{align*}
for all $f\in L^p[a,b]$.

\subsection{Fractional Brownian motion}

	In this section we recall the definition of a fractional Brownian motion and how it can be constructed from a standard Brownian motion using fractional calculus. For a more detailed introduction to this subject we refer the reader to \cite{biagini2008stochastic} and \cite[Chapter 5]{Nualart_MalliavinCalculus}
	
\begin{definition}
	We say $B^H = \left( B_t^H \right)_{t \in [0,T]}$ is a one-dimensional fractional Brownian motion with Hurst parameter $H \in (0,1)$, if it is a continuous and centered Gaussian process with covariance function
	\begin{align*}
		R_H(t,s) := \EW{B_t^H B_s^H} = \frac{1}{2}\left(t^{2H} + s^{2H} - |t-s|^{2H} \right).
	\end{align*}
\end{definition}

	It is well-known that $B^H$ has stationary increments and $(H-\varepsilon)$--Hölder continuous trajectories for all $\varepsilon>0$. Furthermore, $B^H$ is not a semimartingale and its increments are not independent for all $H\in (0,1)$ but $H= \frac{1}{2}$. For $H= \frac{1}{2}$ the process $B^H$ is a standard Brownian motion. \par
In the following we divide fractional Brownian motions into three classes by their Hurst parameters. The first class, $H \in (0,\frac{1}{2})$, is referred to as the \emph{singular case}, the second class, $H\in (\frac{1}{2}, 1)$, is referred to as the \emph{regular case}, and the third class, $H = \frac{1}{2}$, is the class of Brownian motions. Subsequently, we define for each class the kernels $K_H$ as well as the related operators $\Kfrak_H$ and $\Kfrak_H^{-1}$ which allow us to construct a fractional Brownian motion with Hurst parameter $H \in (0,1)$ from a standard Brownian motion. For more details see \cite{decreusefond1999stochastic} and \cite{nualart2002regularization}. Let $W = (W_t)_{t\in[0,T]}$ be a standard Brownian motion on the complete filtered probability space $(\Omega, \Fcal, \Fbb, \Pbb)$.

\vspace{0.5cm}

\emph{Singular Case:} Let $H \in (0, \frac{1}{2})$ and define the kernel
\begin{align}\label{eq:kernelSingular}
	K_H(t,s)= b_H \left[\left( \frac{t}{s}\right)^{H- \frac{1}{2}} (t-s)^{H- \frac{1}{2}} + \left( \frac{1}{2}-H\right) s^{\frac{1}{2}-H} \int_s^t u^{H-\frac{3}{2}} (u-s)^{H-\frac{1}{2}} du\right],
\end{align}
where $b_H = \sqrt{\frac{2H}{(1-2H) \beta(1-2H , H + \frac{1}{2})}}$. Then
\begin{align*}
	B_t^H := \int_0^t K_H(t,s) dW_s
\end{align*}
is a fractional Brownian motion with Hurst parameter $H$. Furthermore, the kernel $K_H$ yields an operator $\Kfrak_H: L^2[0,T] \to I_{0+}^{H+\frac{1}{2}}(L^2[0,T])$ defined by
\begin{align*}
	(\Kfrak_H f)(s) = \int_0^t K_H(t,s) f(s) ds = I_{0+}^{2H} s^{\frac{1}{2}-H} I_{0+}^{\frac{1}{2}-H} s^{H-\frac{1}{2}} f,
\end{align*}
where $f \in L^2[0,T]$. Finally, the inverse operator $\Kfrak^{-1}_H$ of $\Kfrak_H$ is defined by
\begin{align}\label{eq:isometrySingularInverse}
	\Kfrak_H^{-1} f = s^{\frac{1}{2} - H} D_{0+}^{\frac{1}{2}-H} s^{H-\frac{1}{2}} D_{0+}^{2H} f,
\end{align}
where $f \in I_{0+}^{H+\frac{1}{2}}(L^2[0,T])$. If $f$ is absolutely continuous, we can write 
\begin{align*}
	\Kfrak_H^{-1} f = s^{H- \frac{1}{2}} I_{0+}^{\frac{1}{2}-H} s^{\frac{1}{2}-H} f'.
\end{align*}

\vspace{0.5cm}

\emph{Regular Case:} Let $H \in (\frac{1}{2},1)$ and define the kernel
\begin{align}\label{eq:kernelRegular}
	K_H(t,s)= c_H s^{\frac{1}{2}-H} \int_s^t u^{H-\frac{1}{2}} ( u-s )^{H-\frac{3}{2}} du,
\end{align}
where $c_H = \sqrt{\frac{H(2H-1)}{\beta(2-2H , H - \frac{1}{2})}}$. Then
\begin{align*}
	B_t^H := \int_0^t K_H(t,s) dW_s
\end{align*}
is a fractional Brownian motion with Hurst parameter $H$. Furthermore, the kernel $K_H$ yields an operator $\Kfrak_H: L^2[0,T] \to I_{0+}^{H+\frac{1}{2}}(L^2[0,T])$ defined by
\begin{align*}
	(\Kfrak_H f)(s) = \int_0^t K_H(t,s) f(s) ds = I_{0+}^{1} s^{H - \frac{1}{2}} I_{0+}^{H - \frac{1}{2}} s^{\frac{1}{2} - H} f,
\end{align*}
where $f \in L^2[0,T]$. Finally, the inverse operator $\Kfrak^{-1}_H$ of $\Kfrak_H$ is defined by
\begin{align}\label{eq:isometryRegularInverse}
	\Kfrak_H^{-1} f = s^{H - \frac{1}{2}} D_{0+}^{H - \frac{1}{2}} s^{\frac{1}{2} - H} f',
\end{align}
where $f \in I_{0+}^{H+\frac{1}{2}}(L^2[0,T])$.

\vspace{0.5cm}

\emph{Brownian case:} Let $H = \frac{1}{2}$. Obviously, in the case $H=\frac{1}{2}$ the kernel is given by $K_H(t,s) \equiv 1$. Thus the operator $\Kfrak_H$ is defined as
\begin{align*}
	(\Kfrak_H f)(s) = \int_0^t K_H(t,s)f(s) ds = I_{0+}^{1} f,
\end{align*}
where $f \in L^2[0,T]$, and thus its inverse operator $\Kfrak^{-1}_H$ is given by
\begin{align}\label{eq:isometryBrownianInverse}
	\Kfrak_H^{-1} f = f',
\end{align}
where $f \in I_{0+}^{1}(L^2[0,T])$.

\vspace{0.5cm}

\begin{remark}
	Consider a sequence $\Hbb= \lbrace H_k \rbrace_{k\geq 1}$  of Hurst parameters. For the Hilbert space $\Hcal$ with basis $\lbrace e_k \rbrace_{k\geq 1}$ and $f \in L^2([0,T]; \Hcal)$, we define the operator $\Kfrak_\Hbb:  L^2([0,T]; \Hcal) \to I_{0+}^{\Hbb+1/2}(L^2([0,T]; \Hcal))$ componentwise by
\begin{align*}
	(\Kfrak_\Hbb f)(s) := \sum_{k\geq 1} (\Kfrak_{H_k} f_k )(s) e_k,
\end{align*}
where $f_k (s) := \langle f(s), e_k \rangle$, $k\geq 1$. Here, we say $f \in I_{0+}^{\Hbb+1/2}(L^2([0,T]; \Hcal))$, if for every $k\geq 1$ the projection $f_k$ is in $I_{0+}^{H_k+1/2}(L^2[0,T])$. Similarly we define the inverse $\Kfrak_\Hbb^{-1}$ of $\Kfrak_\Hbb$ by
\begin{align*}
	\Kfrak_\Hbb^{-1} f := \sum_{k\geq 1} \Kfrak_{H_k}^{-1} f_k e_k,
\end{align*}
where $f \in  I_{0+}^{\Hbb+1/2}(L^2([0,T]; \Hcal))$.
\end{remark}

\subsection{The weighted cylindrical fractional Brownian motion $\Bb$}\label{sec:wcfBm}
	Let us now define the driving noise $\Bb$ and afterwards derive a version of Girsanov's theorem for cylindrical fractional Brownian motion. Let $\lbrace W^{(k)} \rbrace_{k \geq 1}$ be a sequence of independent Brownian motions defined on the probability space $(\Omega, \Fcal, \Pbb)$. Similar to \cite{banos2019infiniteSDE} we define the \emph{cylindrical Brownian motion} $W:= \left( W_t \right)_{t\in[0,T]}$ taking values in $\Hcal$ by
\begin{align*}
	W_t := \sum_{k \geq 1} W_t^{(k)} e_k, \quad t \in [0,T].
\end{align*}
The natural filtration of $W$ augmented by the $\Pbb$-null sets is denoted by $\Fbb^W:=(\Fcal_t^W)_{t\in [0,T]}$.  Moreover, we consider a sequence of Hurst parameters $\Hbb= \lbrace H_k \rbrace_{k\geq 1}$ and the associated partition $\lbrace I_-, I_0, I_+ \rbrace$ of $\Nbb$ defined by
\begin{enumerate}[(i)]
	\item $k \in I_-: ~H_k \in \left( 0,\frac{1}{2} \right)$,
	\item $k \in I_0: ~H_k = \frac{1}{2}$,
	\item $k \in I_+: ~H_k \in \left(\frac{1}{2}, 1 \right)$.
\end{enumerate}
For $\lbrace H_k \rbrace_{k\geq 1}$ we construct the sequence of fractional Brownian motions $\lbrace B^{H_k} \rbrace_{k \geq 1}$ associated to $\lbrace W^{(k)} \rbrace_{k \geq 1}$ by
\begin{align*}
	B_t^{H_k} := \int_0^t K_{H_k}(t,s) dW_s^{(k)}, \quad t\in[0,T], \quad k\geq 1,
\end{align*}
where the kernel $K_{H_k}(\cdot,\cdot)$ is defined in \eqref{eq:kernelSingular} and \eqref{eq:kernelRegular}, respectively. Note that by construction the fractional Brownian motions $\lbrace B^{H_k}\rbrace_{k \geq 1}$ are independent. We then define the \emph{cylindrical fractional Brownian motion} $B^\Hbb$ with associated sequence of Hurst parameters $\Hbb = \lbrace H_k \rbrace_{k\geq 1}$ by
\begin{align*}
	B_t^\Hbb := \sum_{k\geq 1} B_t^{H_k} e_k, \quad t \in [0,T].
\end{align*}
Observe that the natural filtration of $B^\Hbb$ augmented by the $\Pbb$-null sets and $\Fbb^W$ coincide. Furthermore, for a given sequence $\lambda := \lbrace \lambda_k \rbrace_{k\geq 1} \in \ell^1$ such that $\sum_{k\in I_-} \frac{\lambda_k}{\sqrt{H_k}} < \infty$, we define the self-adjoint operator $Q: \Hcal \to \Hcal$ by
\begin{align*}
	Qx = \sum_{k\geq 1} \lambda_k^2 x^{(k)} e_k,
\end{align*}
and thereby construct the \emph{weighted cylindrical fractional Brownian motion} $\Bb$ by
\begin{align}\label{eq:qCylindricalFBm}
	\Bb_t := \sqrt{Q} B_t^\Hbb =  \sum_{k\geq 1} \lambda_k B_t^{H_k} e_k, \quad t\in[0,T].
\end{align}
Due to the following lemma, the process $\Bb$ is continuous in time and is in $L^2(\Omega; \Hcal)$.

\begin{lemma}\label{lem:continuity&L2}
	The weighted cylindrical fractional Brownian motion $\Bb$ defined in \eqref{eq:qCylindricalFBm} has almost surely continuous sample paths on $[0,T]$ and
	\begin{align*}
		\sup_{t \in [0,T]}\Ebb \left[ \left\Vert \Bb_t \right\Vert_{\Hcal}^2 \right] < \infty.
	\end{align*}
\end{lemma}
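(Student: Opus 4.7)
My plan is to treat the two assertions separately. For the second-moment bound, I would compute directly using the orthonormality of $\{e_k\}$ and independence of the $(B^{H_k})_k$:
\[
\Ebb\bigl[\|\Bb_t\|_{\Hcal}^2\bigr] = \sum_{k\geq 1} \lambda_k^2\, \Ebb\bigl[(B_t^{H_k})^2\bigr] = \sum_{k\geq 1} \lambda_k^2 \, t^{2H_k} \;\leq\; T^2 \sum_{k\geq 1} \lambda_k^2 ,
\]
which is finite since $T \geq 1$ and $\lambda \in \ell^1 \subset \ell^2$; the right-hand side does not depend on $t$, so it also controls the supremum.

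For the almost sure continuity I would argue via uniform convergence of the partial sums $\Bb_t^N := \sum_{k=1}^N \lambda_k B_t^{H_k} e_k$. Each $\Bb^N$ has almost surely continuous sample paths as a finite sum of continuous processes, so it suffices to produce an almost surely uniformly convergent subsequence; a standard Markov/Borel--Cantelli argument will extract one from the $L^2$ estimate
\[
\Ebb\Bigl[\sup_{t\in[0,T]} \|\Bb_t - \Bb_t^N\|_\Hcal^2\Bigr] \leq \sum_{k>N} \lambda_k^2 \, \Ebb\Bigl[\sup_{t\in[0,T]} (B_t^{H_k})^2\Bigr],
\]
provided the total sum over $k \geq 1$ is finite.

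The core of the plan is therefore a size estimate on $M_k := \Ebb[\sup_{t\in[0,T]} (B_t^{H_k})^2]$. Using the self-similarity $(B_t^{H_k})_{t\in[0,T]} \stackrel{d}{=} (T^{H_k} B_{t/T}^{H_k})_{t\in[0,T]}$ I would write $M_k = T^{2H_k}\, \Ebb[\sup_{s\in[0,1]} (B_s^{H_k})^2]$. For $k \in I_0 \cup I_+$ a universal bound on the latter factor is immediate (reflection principle for Brownian motion, Hölder regularity of the sample paths for $H_k > 1/2$), so these indices contribute at most a constant multiple of $T^2 \sum_{k \in I_0 \cup I_+} \lambda_k^2 < \infty$. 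For $k \in I_-$ I would apply Dudley's metric-entropy bound to the centered Gaussian process $(B_s^{H_k})_{s\in[0,1]}$ equipped with its canonical pseudo-metric $|t-s|^{H_k}$; the covering number at scale $\varepsilon$ is of order $\varepsilon^{-1/H_k}$, yielding $\Ebb[\sup_{s\in[0,1]} |B_s^{H_k}|] \lesssim 1/\sqrt{H_k}$, and an upgrade to second moments via Borell--TIS (or Gaussian hypercontractivity) gives $\Ebb[\sup_{s\in[0,1]} (B_s^{H_k})^2] \lesssim 1/H_k$. Since $T \geq 1$ and $H_k < 1/2$ imply $T^{2H_k} \leq T$, the singular-range contribution is at most a constant times $\sum_{k\in I_-} \lambda_k^2/H_k$.

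It remains to see that this last series converges. The hypothesis $\sum_{k\in I_-} \lambda_k/\sqrt{H_k} < \infty$ forces $\lambda_k/\sqrt{H_k} \to 0$, so eventually $(\lambda_k/\sqrt{H_k})^2 \leq \lambda_k/\sqrt{H_k}$, and hence $\sum_{k \in I_-} \lambda_k^2/H_k < \infty$. The main obstacle is obtaining the scaling $M_k \lesssim T^{2H_k}/H_k$ sharply enough in $H_k$ for small $H_k$, as the hypothesis on $\lambda$ is calibrated precisely to this rate; once this Gaussian supremum estimate is in hand, the Cauchy argument above closes the proof.
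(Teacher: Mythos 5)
Your proof is correct, and it reaches the same quantitative heart of the matter as the paper --- namely that $\Ebb[\sup_{t\in[0,1]}|B^{H}_t|]$ grows like $1/\sqrt{H}$ as $H\to 0$, which is exactly the rate against which the standing hypothesis $\sum_{k\in I_-}\lambda_k/\sqrt{H_k}<\infty$ is calibrated --- but it gets there by different tools. The paper works in $L^1(\Omega;\Ccal([0,T];\Hcal))$ via the triangle inequality $\|\Bb_t\|_\Hcal\le\sum_k\lambda_k|B^{H_k}_t|$, quoting a result of Borovkov et al.\ for the $T^{H_k}/\sqrt{H_k}$ bound on the expected maximum in the singular range and Sudakov--Fernique for $H_k\ge 1/2$; you instead work in $L^2$ with orthogonality of the basis, derive the $1/\sqrt{H_k}$ rate yourself from Dudley's entropy bound (the covering numbers $\varepsilon^{-1/H_k}$ give precisely $H_k^{-1/2}\int_0^1\sqrt{\log(1/\varepsilon)}\,d\varepsilon$), and upgrade to second moments via Borell--TIS, which forces you to check the extra summability $\sum_{k\in I_-}\lambda_k^2/H_k<\infty$; your deduction of this from the paper's hypothesis (a summable nonnegative sequence is square-summable) is valid. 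Your approach is more self-contained, at the cost of the mild moment upgrade; the paper's is shorter because first moments suffice for the $L^1$ Cauchy argument. One small point: for $k\in I_+$ the phrase ``H\"older regularity of the sample paths'' does not by itself give a bound uniform in $H_k$ (Kolmogorov-type constants could degenerate as $H_k\to 1$); the clean justification is that your own Dudley estimate already gives $\Ebb[\sup_{s\in[0,1]}|B^{H_k}_s|]\lesssim 1/\sqrt{H_k}\le\sqrt{2}$ uniformly over $H_k\ge 1/2$, or alternatively the Sudakov--Fernique comparison the paper uses. The second-moment assertion is proved identically in both arguments.
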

\begin{proof}
	Note first that for every $k \in I_-$ and time points $s,t\in[0,T]$, the fractional Brownian motion $B^{H_k}$ fulfills
	\begin{align*}
		\Ep{\left\vert B_t^{H_k} \right\vert - \left\vert B_s^{H_k} \right\vert}{2} \leq \Ep{B_t^{H_k} - B_s^{H_k}}{2} = \vert t-s \vert^{H_k}.
	\end{align*}		
	Hence due to \cite[Theorem 1]{borovkov2017bounds} the expected maximum of $\vert B^{H_k} \vert$ is bounded by 
	\begin{align*}
		\EW{\sup_{t\in[0,T]} \left\vert B_t^{H_k} \right\vert} = T^{H_k} \EW{\sup_{t\in[0,1]} \left\vert B_t^{H_k} \right\vert} \lesssim \frac{T^{H_k}}{\sqrt{H_k}}.
	\end{align*}
	In the case of a standard Brownian motion, i.e. $H=\frac{1}{2}$, the exact value of the expected maxima is known and is equal to $\sqrt{\frac{2 T}{\pi}}$. Using Sudakov-Fernique's inequality (see \cite[Theorem 1]{vitale2000some}) we thus get for $k\in I_0 \cup I_+$ the of $H_k$ independent upper bound
	\begin{align*}
		\EW{\sup_{t\in[0,T]} \left\vert B_t^{H_k} \right\vert} \leq T^{H_k - \frac{1}{2}} \EW{\sup_{t\in[0,T]} \left\vert W^{(k)}_t \right\vert} = T^{H_k} \sqrt{\frac{2}{\pi}} \leq T \sqrt{\frac{2}{\pi}}.
	\end{align*}
	Let us now consider the weighted cylindrical fractional Brownian motion $\Bb$ defined in \eqref{eq:qCylindricalFBm}. Using the previous bounds we have that
	\begin{align*}
		\EW{\sup_{t\in[0,T]} \Vert \Bb_t \Vert_\Hcal} &= \EW{\sup_{t\in[0,T]} \left\Vert \sum_{k\geq 1} \lambda_k B_t^{H_k} e_k \right\Vert_\Hcal} \leq \sum_{k\geq 1} \lambda_k \EW{\sup_{t\in[0,T]} \left\vert B_t^{H_k} \right\vert} \\
		&\lesssim \sum_{k\in I_-} \frac{\lambda_k T^{H_k}}{\sqrt{H_k}} + \sum_{k\in I_0 \cup I_+} \lambda_k \lesssim \sum_{k\in I_-} \frac{\lambda_k}{\sqrt{H_k}} + \Vert \lambda \Vert_{\ell^1} < \infty.
	\end{align*}
	Consequently, the stochastic process $\Bb$ is almost surely finite and the sequence of projections $\lbrace \sum_{k=1}^n \langle \Bb, e_k \rangle_\Hcal e_k \rbrace_{n\geq 1}$ is a Cauchy sequence in $L^1(\Omega; \Ccal([0,T];\Hcal))$ converging almost surely to the process $\Bb$. Thus, $t \mapsto \Bb_t$ is continuous on $[0,T]$. Furthermore, using Parseval's identity we get
	\begin{align*}
		\EW{\left\Vert \Bb_t \right\Vert_\Hcal^2} &= \EW{\left\Vert \sum_{k\geq 1} \lambda_k B_t^{H_k} e_k \right\Vert_\Hcal^2} = \sum_{k\geq 1} \lambda_k^2 \EW{ \left\vert B_t^{H_k} \right\vert^2} = \sum_{k\geq 1} \lambda_k^2 t^{2H_k} \leq  \Vert \lambda \Vert_{\ell^2}^2 T^2 < \infty.
	\end{align*}
\end{proof}

\subsection{Girsanov's theorem for cylindrical fractional Brownian motions}

Due to \cite[Theorem 2.2 and Remark 2.3]{banos2019infiniteSDE} we get the following version of Girsanov's theorem for cylindrical fractional Brownian motions.

\begin{theorem}[Girsanov's theorem for fBm]\label{thm:MFGirsanov}
	Let $u=\{u_t, t\in [0,T]\}$ be an $\Fbb^W$-adapted process with values in $\Hcal$ and integrable trajectories. If
	\begin{itemize}
		\item[(i)] $\int_0^{\cdot} u_s^{(k)} ds \in I_{0+}^{H_k+\frac{1}{2}} (L^2 [0,T])$, $\Pbb$-a.s. for every $k\geq 1$, and
		\item[(ii)] $\EW{\exp\left\lbrace \sum_{k\geq 1} \int_0^T \Kfrak_{H_k}^{-1} \left( \int_0^\cdot u_r^{(k)} dr \right)^2(s) ds \right\rbrace} < \infty$,	
	\end{itemize}
	where $\Kfrak_{H_k}^{-1}$ is defined as in \eqref{eq:isometrySingularInverse}, \eqref{eq:isometryRegularInverse}, and \eqref{eq:isometryBrownianInverse}, respectively, then the shifted process 
	\begin{align*}
		\widetilde{B}_t^\Hbb := B_t^\Hbb + \int_0^t u_s ds = \sum_{k\geq 1} \left( B_t^{H_k} + \int_0^t u_s^{(k)} ds \right) e_k,
	\end{align*}
	is a cylindrical fractional Brownian motion with associated sequence of Hurst parameters $\Hbb = \lbrace H_k \rbrace_{k\geq 1}$ under the new probability measure $\widetilde{\Pbb}$ defined by $\frac{d\widetilde{\Pbb}}{d\Pbb} := \Ecal_T$, where
	\begin{small}
	\begin{align}\label{eq:stochasticExponentialInfinite}
		\Ecal_T := \exp\left\lbrace \sum_{k\geq 1} \left( \int_0^T \Kfrak_{H_k}^{-1} \left( \int_0^\cdot u_r^{(k)} dr \right)(s) dW_s^{(k)} - \frac{1}{2} \int_0^T \Kfrak_{H_k}^{-1} \left( \int_0^\cdot u_r^{(k)} dr \right)^2(s) ds \right) \right\rbrace.
	\end{align}
	\end{small}
\end{theorem}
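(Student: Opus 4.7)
The plan is to reduce the statement to the one-dimensional Girsanov theorem for fractional Brownian motion applied componentwise and then, exploiting the independence of the driving Brownian motions $\lbrace W^{(k)} \rbrace_{k\geq 1}$, to piece the components together in the infinite-dimensional limit.

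For each fixed $k\geq 1$, assumption (i) guarantees that $\int_0^\cdot u_s^{(k)} ds$ lies in $I_{0+}^{H_k+\frac{1}{2}}(L^2[0,T])$, so that $\Kfrak_{H_k}^{-1}\bigl(\int_0^\cdot u_r^{(k)} dr\bigr)$ is a well-defined element of $L^2[0,T]$. Since $B_t^{H_k}=\int_0^t K_{H_k}(t,s)\, dW_s^{(k)}$, the inverse operator $\Kfrak_{H_k}^{-1}$ is designed precisely so that a shift of $B^{H_k}$ by $\int_0^\cdot u_s^{(k)} ds$ corresponds to shifting the underlying Brownian motion $W^{(k)}$ by $\Kfrak_{H_k}^{-1}\bigl(\int_0^\cdot u_r^{(k)} dr\bigr)$. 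Applying the classical Girsanov theorem to $W^{(k)}$ therefore proves the one-dimensional version: under the measure with density $\Ecal_T^{(k)}$, equal to the $k$-th exponential factor appearing in \eqref{eq:stochasticExponentialInfinite}, the process $B^{H_k}+\int_0^\cdot u_s^{(k)} ds$ is a fractional Brownian motion with parameter $H_k$, while the remaining $W^{(j)}$ for $j\neq k$ are unaffected.

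To pass to infinite dimensions I would truncate: for $N\geq 1$ set $\Ecal_T^N := \prod_{k=1}^N \Ecal_T^{(k)}$, which, by independence of the $W^{(k)}$, is a product of independent Dol\'eans-Dade exponentials on mutually independent Brownian filtrations. Each factor has unit expectation, so $\Ebb[\Ecal_T^N]=1$, and under $d\Pbb^N := \Ecal_T^N d\Pbb$ the shifted components $\lbrace B^{H_k}+\int_0^\cdot u_s^{(k)} ds\rbrace_{k\leq N}$ together with the untouched $\lbrace B^{H_k}\rbrace_{k>N}$ form a collection of independent fractional Brownian motions with the prescribed Hurst parameters. The main obstacle is the passage $N\to\infty$: assumption (ii) plays the role of an infinite-dimensional Novikov condition guaranteeing that $\lbrace \Ecal_T^N \rbrace_{N\geq 1}$ is uniformly integrable, hence $\Ecal_T^N \to \Ecal_T$ in $L^1(\Pbb)$ with $\Ebb[\Ecal_T]=1$, so $\widetilde{\Pbb}$ is a genuine probability measure; identifying the law of $\widetilde{B}^\Hbb$ by checking that its finite-dimensional projections are the $N\to\infty$ limits of those under $\Pbb^N$ then yields that $\widetilde{B}^\Hbb$ is a cylindrical fractional Brownian motion with parameter sequence $\Hbb$ under $\widetilde{\Pbb}$. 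As this is exactly the content of \cite[Theorem 2.2 and Remark 2.3]{banos2019infiniteSDE}, the proof would conclude by citing that reference once (i) and (ii) have been matched to their hypotheses.
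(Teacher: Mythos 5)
The paper gives no proof of this theorem at all: it is imported verbatim from \cite[Theorem 2.2 and Remark 2.3]{banos2019infiniteSDE}, so your plan of matching hypotheses (i) and (ii) to that reference and citing it is exactly what the paper does, and your conclusion is correct. The sketch you wrap around the citation is the standard strategy (componentwise transfer of the shift to the underlying Brownian motions via $\Kfrak_{H_k}^{-1}$, truncation, passage to the limit using (ii) as a Novikov-type condition), but one step of it is inaccurate as stated: since $u$ is only assumed $\Fbb^W$-adapted, each component $u^{(k)}$ may depend on the \emph{whole} cylindrical Brownian motion, so the integrands $\Kfrak_{H_k}^{-1}\bigl(\int_0^\cdot u_r^{(k)}\,dr\bigr)$ are not functionals of $W^{(k)}$ alone and the factors $\Ecal_T^{(k)}$ need not be independent. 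Hence $\Ebb[\Ecal_T^N]=1$ cannot be obtained as a product of independent unit-mean exponentials; one must instead view $\sum_{k\leq N}\int_0^T \Kfrak_{H_k}^{-1}(\cdots)(s)\,dW_s^{(k)}$ as a single continuous martingale for the full filtration $\Fbb^W$ and apply the multidimensional Girsanov/Novikov theorem to it (which condition (ii) permits, since it dominates $\Ebb[\exp(\tfrac12\langle M\rangle_T)]<\infty$ for the truncated martingale uniformly in $N$). This does not affect the validity of your conclusion, because the final step defers to the cited reference anyway, but the independence argument as written would not survive as a standalone proof.
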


	It is shown in \cite{nualart2002regularization} that in the case $k \in I_- \cup I_0$ it is sufficient to assume $\int_0^T \vert u_s^{(k)} \vert^2  ds < \infty$ such that for $u^{(k)}$ condition (i) in \Cref{thm:MFGirsanov} is fulfilled. In the case $k \in I_+$ condition (i) in \Cref{thm:MFGirsanov} is fulfilled if the process $u^{(k)}$ is assumed to have Hölder continuous trajectories of order $H_k - \frac{1}{2} + \varepsilon$ for some $\varepsilon>0$. If we assume further that
	\begin{enumerate}[(i$^*$)]
	\setcounter{enumi}{1}
		\item $\int_0^T \Kfrak_{H_k}^{-1} \left( \int_0^\cdot u_r^{(k)} dr \right)^2 (s) ds \leq D_k$ $\Pbb$-a.s. for all $k\geq1$,
	\end{enumerate}
	where $D = \lbrace D_k \rbrace_{k\geq 1} \in \ell^1$ is a sequence of constants, then assumption (ii) is also fulfilled and thus Girsanov's theorem is applicable. We summarize these observations in the following corollary.

\begin{corollary}\label{cor:conditionsGirsanov}
	Let $(u_t)_{t\in[0,T]}$ be an $\Fbb^W$-adapted process such that $\int_0^T \vert u_s^{(k)} \vert^2  ds < \infty$ for all $k\in I_- \cup I_0$, and for $k \in I_+$ the process $u^{(k)}$ has Hölder continuous trajectories of order $H_k - \frac{1}{2} +  \varepsilon$ for some $\varepsilon>0$. Furthermore, assume that condition $(ii^\ast)$ is fulfilled. Then, conditions (i) and (ii) in \Cref{thm:MFGirsanov} are satisfied, and thus the stochastic exponential \eqref{eq:stochasticExponentialInfinite} defines the Radon-Nikodym density of a probability measure. Moreover, for every $p\in [0,\infty)$
	\begin{align*}
		\EpO{\Ecal_T}{p} < \infty.
	\end{align*}
\end{corollary}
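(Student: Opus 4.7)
The plan is to first verify conditions (i) and (ii) of Theorem 2.5 and then deduce the $L^p$ bound by an exponential-tilt (change-of-measure) argument. Condition (i) is essentially given by the preamble to the corollary: for $k\in I_-\cup I_0$, the cited result of \cite{nualart2002regularization} translates $\int_0^T |u_s^{(k)}|^2 ds < \infty$ into $\int_0^\cdot u_r^{(k)} dr \in I_{0+}^{H_k+1/2}(L^2[0,T])$; for $k\in I_+$, the assumed H\"older regularity of order $H_k-\tfrac12+\varepsilon$ combined with the explicit formula \eqref{eq:isometryRegularInverse} delivers the same inclusion. To verify condition (ii), set $X_s^{(k)} := \Kfrak_{H_k}^{-1}\bigl(\int_0^\cdot u_r^{(k)} dr\bigr)(s)$; assumption $(ii^*)$ and $D\in\ell^1$ give
$$\sum_{k\geq 1}\int_0^T (X_s^{(k)})^2\, ds \;\leq\; \sum_{k\geq 1} D_k \;=\; \|D\|_{\ell^1} \;<\;\infty \quad \Pbb\text{-a.s.},$$
so the exponential in (ii) is a.s.\ bounded by $e^{\|D\|_{\ell^1}}$, in particular integrable. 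Thus \Cref{thm:MFGirsanov} applies and $\Ecal_T$ is a Radon--Nikodym density; in particular $\Ebb[\Ecal_T]=1$.

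For the $L^p$ estimate I would use the classical exponential-tilt trick. The partial sums of the stochastic integral are Cauchy in $L^2(\Omega)$ by It\^o's isometry and $(ii^*)$, so $Y_T:=\sum_{k\geq 1}\int_0^T X_s^{(k)}\, dW_s^{(k)}$ is well defined with $\langle Y\rangle_T := \sum_{k\geq 1}\int_0^T (X_s^{(k)})^2\, ds \leq \|D\|_{\ell^1}$ a.s. For $p\geq 1$ write
$$\Ecal_T^p \;=\; \exp\!\Bigl(pY_T - \tfrac{p^2}{2}\langle Y\rangle_T\Bigr)\cdot \exp\!\Bigl(\tfrac{p(p-1)}{2}\langle Y\rangle_T\Bigr) \;=:\; \Ecal_T^{(p)}\cdot R_T^{(p)}.$$
The first factor $\Ecal_T^{(p)}$ is the stochastic exponential \eqref{eq:stochasticExponentialInfinite} built from $p\,u$ in place of $u$. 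Since $p\,u$ still satisfies all hypotheses of the corollary (with $D$ replaced by $p^2 D$, which is again in $\ell^1$), the first part of the proof yields $\Ebb[\Ecal_T^{(p)}]=1$. On the other hand $R_T^{(p)}\leq \exp\!\bigl(\tfrac{p(p-1)}{2}\|D\|_{\ell^1}\bigr)$ deterministically by $(ii^*)$, which gives
$$\Ebb[\Ecal_T^p] \;\leq\; \exp\!\Bigl(\tfrac{p(p-1)}{2}\|D\|_{\ell^1}\Bigr) \;<\; \infty.$$
For $0\leq p<1$, Jensen's inequality applied to the concave function $x\mapsto x^p$ yields $\Ebb[\Ecal_T^p]\leq \Ebb[\Ecal_T]^p = 1$, so the bound holds for every $p\in[0,\infty)$.

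The main obstacle is bookkeeping the infinite sums: one must ensure that the series defining $Y_T$ and $\langle Y\rangle_T$ (and likewise the tilted versions built from $p\,u$) actually converge in the appropriate modes before invoking \Cref{thm:MFGirsanov}. This is precisely the step where the summability $D\in\ell^1$ together with $(ii^*)$ is used essentially: it provides a uniform-in-$k$ control that turns the formal infinite-dimensional Nov\-ikov-type condition into an honest a.s.\ bound, allowing a componentwise argument to be lifted to $\Hcal$.
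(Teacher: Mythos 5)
Your proposal is correct and follows essentially the same route as the paper, which presents this corollary as a summary of the preceding discussion: condition (i) from the cited regularity facts for each of the three Hurst regimes, and condition (ii) from the deterministic bound $\sum_{k\geq 1} D_k = \Vert D \Vert_{\ell^1} < \infty$ supplied by $(ii^\ast)$. The paper leaves the $L^p$ moment bound implicit, and your exponential-tilt argument (replacing $u$ by $pu$, which by linearity of $\Kfrak_{H_k}^{-1}$ satisfies the same hypotheses with $D$ replaced by $p^2 D \in \ell^1$, and bounding the residual factor $\exp\bigl(\tfrac{p(p-1)}{2}\langle Y\rangle_T\bigr)$ deterministically) is exactly the standard argument the authors intend.
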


\section{Existence and Uniqueness of Weak Solutions}\label{sec:solution}

	In this section we proof under sufficient conditions on the drift function $b$ the existence and uniqueness of weak solutions to the MKV equation \eqref{eq:mfsde}, where the weighted cylindrical fractional Brownian motion is characterized by a given sequence of Hurst parameters $\Hbb$ and the weighting operator $Q$. We show first existence of a weak solution using \Cref{thm:MFGirsanov} and Schauder's fixed point theorem. Afterwards weak uniqueness of the solution is proven. Let us first recall the definition of a weak solution and uniqueness in law, and then state the main result of this section.

\begin{definition}\label{def:MFweakSolution}
		We say the six-tuple $(\Omega, \Fcal, \Fbb, \Pbb, \Bb, X)$ is a \emph{weak solution} of MKV equation \eqref{eq:mfsde}, if
		\begin{enumerate}[(i)]
			\item $(\Omega, \Fcal, \Pbb)$ is a complete probability space and $\Fbb :=\lbrace \Fcal_t \rbrace_{t\in[0,T]}$ is a filtration on $(\Omega, \Fcal, \Pbb)$ satisfying the usual conditions of right-continuity and completeness,
			\item $X = (X_t)_{t\in[0,T]}$ is a continuous, $\Fbb$-adapted, $\Hcal$-valued process; $\Bb := (\Bb_t)_{t\in[0,T]}$ is a weighted cylindrical fractional Brownian motion with respect to $(\Fbb, \Pbb)$,
			\item $X$ satisfies $\Pbb$-a.s. MKV equation \eqref{eq:mfsde}, where $\Pbb_{X_t} \in \Pcal_1(\Hcal)$ denotes for all $t\in [0,T]$ the law of $X_t$ with respect to $\Pbb$.
		\end{enumerate}
\end{definition}

\begin{remark}
	We merely say that $X$ is a weak solution of MKV equation \eqref{eq:mfsde}, if there is no ambiguity about the filtered stochastic basis $(\Omega, \Fcal, \Fbb, \Pbb, \Bb)$.
\end{remark}	

\begin{definition}\label{def:MFweakUniqueness}
	A weak solution $(\Omega^1, \Fcal^1,\Fbb^1, \Pbb^1, \Bb^1 , X^1)$ of MKV equation \eqref{eq:mfsde} is called \emph{unique in law}, if for any other weak solution $(\Omega^2, \Fcal^2,\Fbb^2, \Pbb^2, \Bb^2, X^2)$ of \eqref{eq:mfsde} it holds that $\Pbb^1_{X^1} = \Pbb^2_{X^2}$, whenever $\Pbb^1_{X_0^1} = \Pbb^2_{X_0^2}$.
\end{definition}

\begin{theorem}\label{thm:weakSolution}
		Let $b: [0,T] \times \Hcal \times \Pcal_1(\Hcal) \to \Hcal$ be a measurable function such that $\Vert b_k \Vert_{\infty} \leq C_k \lambda_k$ for all $k\geq 1$, where $\frac{C}{\sqrt{1-\Hbb}} \in \ell^1$ for $C := \lbrace C_k \rbrace_{k\geq 1}$ and assume that
		\begin{align*}
			\left( \sum_{k\geq 1} \lambda_k^2 (t-s)^{2H_k} \right)^{\frac{1}{2}} \leq \rho \vert t-s \vert^\kappa,
		\end{align*}
		where $\rho >0$  and $0 < \kappa < 1$ are constants. Furthermore, assume that in the case $k \in I_+$,
	\begin{align}\label{eq:HoelderAssumptionb}
		\vert b_k (t,x,\mu) - b_k (s,y,\nu) \vert \leq C_k \lambda_k \left( \vert t-s \vert^{\gamma_k} + \Vert x-y \Vert_\Hcal^{\alpha_k} + \Kcal(\mu, \nu)^{\beta_k} \right),
	\end{align}
	where $\gamma_k > H_k - \frac{1}{2}$, $2 \geq \kappa \alpha_k > 2H_k - 1$, and $\kappa \beta_k > H_k - \frac{1}{2}$, and in the case $k \in I_- \cup I_0$ that for every $\mu \in \Ccal([0,T];\Pcal_1(\Hcal))$ and every $\varepsilon>0$ there exists $\delta>0$ such that for all $k\geq 1$ and $\nu \in \Ccal([0,T];\Pcal_1(\Hcal))$
		\begin{align}\label{eq:continuousUniformly}
			\sup_{t\in [0,T]} \Kcal(\mu_t,\nu_t) < \delta ~\Rightarrow \sup_{t\in[0,T], ~ y\in \Hcal}\left\vert b_k (t,y,\mu_t)-b_k (t,y,\nu_t) \right\vert < \varepsilon C_k \lambda_k.
		\end{align}
		Then, MKV equation \eqref{eq:mfsde} has a weak solution.
\end{theorem}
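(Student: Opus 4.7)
The plan is to execute the Girsanov-plus-Schauder-fixed-point scheme sketched in the Introduction, organised in three phases.

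\textbf{Phase 1: Solving the parametrised SDE via Girsanov.} For a fixed flow $\mu \in \Ccal([0,T]; \Pcal_1(\Hcal))$ I work on the reference space carrying $\Bb = \sqrt{Q}\, B^\Hbb$, set $X_t^\mu := x + \Bb_t$, and introduce the candidate drift correction
\begin{align*}
u_s := -\sum_{k\geq 1} \lambda_k^{-1} b_k(s, X_s^\mu, \mu_s)\, e_k, \qquad |u_s^{(k)}| \leq C_k.
\end{align*}
To apply \Cref{thm:MFGirsanov} via \Cref{cor:conditionsGirsanov}: for $k \in I_- \cup I_0$ the $L^2$-condition is immediate from boundedness; for $k \in I_+$ the required $(H_k-\tfrac{1}{2}+\varepsilon)$-Hölder regularity of $u^{(k)}$ is obtained by composing \eqref{eq:HoelderAssumptionb} with the $\kappa$-Hölder path $\Bb$, the three exponent conditions $\gamma_k > H_k - \tfrac{1}{2}$, $\kappa \alpha_k > 2H_k - 1$, $\kappa \beta_k > H_k - \tfrac{1}{2}$ being tuned exactly for this. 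For hypothesis $(ii^\ast)$ I expand $\Kfrak_{H_k}^{-1}\big(\int_0^\cdot u_r^{(k)} dr\big)$ via its Riemann--Liouville representation and bound its $L^2([0,T])$-norm using the boundedness (and, for $k\in I_+$, the Hölder control) of $u^{(k)}$; the hypothesis $C/\sqrt{1-\Hbb} \in \ell^1$ is engineered to absorb the $(1-H_k)^{-1/2}$ blow-up of the normalising constant $c_{H_k}$ in $\Kfrak_{H_k}^{-1}$ as $H_k \uparrow 1$, producing an $\ell^1$-majorant $D_k$. Girsanov then supplies a measure $\widetilde{\Pbb}^\mu$ under which $\widetilde{\Bb} := \sqrt{Q}\,\widetilde{B}^\Hbb$ is a weighted cylindrical fBm and $X^\mu$ weakly solves $dY_t = b(t,Y_t,\mu_t)\, dt + d\widetilde{\Bb}_t$.

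\textbf{Phase 2: Schauder setup and compactness.} Define $\varphi(\mu)_t := \widetilde{\Pbb}^\mu \circ (X_t^\mu)^{-1}$; a fixed point of $\varphi$ yields the desired weak solution to \eqref{eq:mfsde}. As Schauder set I take a convex closed $K \subset \Ccal([0,T]; \Pcal_1(\Hcal))$ consisting of flows with a uniform $\kappa$-Hölder-in-time modulus (with a constant large enough to re-absorb the one produced below) and uniform mode-wise second-moment bounds. Invariance $\varphi(K) \subset K$ follows from the dual-Lipschitz identity
\begin{align*}
\Kcal(\varphi(\mu)_t, \varphi(\mu)_s) \leq \EW{\Ecal_T^\mu \bigl(\Vert \Bb_t - \Bb_s \Vert_\Hcal \wedge 2\bigr)} \lesssim |t-s|^\kappa,
\end{align*}
where Cauchy--Schwarz combined with the uniform $L^p(\Pbb)$-bound on $\Ecal_T^\mu$ from \Cref{cor:conditionsGirsanov} and the moment estimate of \Cref{lem:continuity&L2} yields a constant independent of $\mu$. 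Pointwise tightness of $\{\varphi(\mu)_t\}_{\mu \in K}$ in $\Pcal_1(\Hcal)$ is obtained by the same change-of-measure argument applied mode-wise, using the $\ell^1$-summability of $\{\lambda_k\}$; together with time equicontinuity this gives relative compactness of $\varphi(K)$ in $\Ccal([0,T]; \Pcal_1(\Hcal))$ by Arzelà--Ascoli.

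\textbf{Phase 3: Continuity of $\varphi$, and the main obstacle.} If $\mu^n \to \mu$ in $\Vert \cdot \Vert_{\Kcal^\ast}$, I aim to show $\Ecal_T^{\mu^n} \to \Ecal_T^\mu$ in $L^1(\Pbb)$, which immediately upgrades to $\Vert \varphi(\mu^n) - \varphi(\mu) \Vert_{\Kcal^\ast} \to 0$. By the explicit stochastic-exponential form this reduces to showing that, for each $k$,
\begin{align*}
\int_0^T \Kfrak_{H_k}^{-1}\Bigl(\int_0^\cdot (u^{n,(k)}_r - u^{(k)}_r)\, dr\Bigr)^{\!2}\!(s)\, ds \longrightarrow 0
\end{align*}
in probability, while the whole series is dominated by the Phase 1 majorant $D_k$; a Vitali argument using the uniform higher-$L^p$ bound on $\Ecal_T^{\mu^n}$ then promotes convergence in probability to convergence in $L^1$. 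For $k \in I_+$ this convergence is quantitative through $\Kcal(\mu^n_s, \mu_s)^{\beta_k}$ from \eqref{eq:HoelderAssumptionb}; for $k \in I_- \cup I_0$ it is purely qualitative, furnished by the uniform continuity assumption \eqref{eq:continuousUniformly}. The main obstacle is precisely the $\ell^1$-summable control of $\Kfrak_{H_k}^{-1}$ in $(ii^\ast)$ uniformly in $k$ -- this is what dictates every quantitative hypothesis in the theorem -- together with the delicate Phase 3 continuity in the singular regime $k\in I_-$, where the double fractional derivative inside $\Kfrak_{H_k}^{-1}$ amplifies the only-qualitative continuity of $b_k$ in the measure argument.
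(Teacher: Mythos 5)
Your proposal follows essentially the same route as the paper: Girsanov's theorem (via \Cref{cor:conditionsGirsanov}) to solve the frozen-measure SDE \eqref{eq:auxiliarySDE} on $\Ccal^\kappa([0,T];\Pcal_1(\Hcal))$, then Schauder's fixed point theorem for the map $\mu \mapsto \Pbb^\mu_{X^\mu}$, with invariance, compactness via tightness and Arzel\`a--Ascoli, and continuity verified exactly as you outline. The ``main obstacle'' you flag in Phase 3 for $k \in I_-$ is not actually an obstacle: the bound on $\int_0^T \Kfrak_{H_k}^{-1}(\int_0^\cdot u_r^{(k)}dr)^2(s)\,ds$ in the singular case depends only on the sup-norm of $u^{(k)}$, so replacing $u^{(k)}$ by the difference $\lambda_k^{-1}(b_k(\cdot,\mu)-b_k(\cdot,\nu))$, which \eqref{eq:continuousUniformly} bounds by $\varepsilon C_k$, scales the majorant $D_k$ by $\varepsilon^2$; the paper then gets continuity of $\psi$ quantitatively from $\vert e^x-e^y\vert \leq \vert x-y\vert(e^x+e^y)$, Cauchy--Schwarz and Burkholder--Davis--Gundy rather than your Vitali argument, but both work. (One small misattribution: the $(1-H_k)^{-1}$ blow-up absorbed by $\frac{C}{\sqrt{1-\Hbb}}\in\ell^1$ comes from $\int_0^T s^{1-2H_k}\,ds$ in the regular-case estimate, not from the normalising constant $c_{H_k}$.)
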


	The proof of \Cref{thm:weakSolution} is divided into two main steps. First we show using \Cref{thm:MFGirsanov} that for every $\mu \in \Ccal^\kappa([0,T]; \Pcal_1(\Hcal))$, for some suitable $\kappa > 0$, the (distribution dependent) SDE
	\begin{align}\label{eq:auxiliarySDE}
		dX_t^\mu = b\left( t, X_t^\mu, \mu_t \right) dt + d\Bb_t, \quad t\in[0,T], \quad X_0^\mu = x,
	\end{align}
	has a weak solution. Second, we apply Schauder's fixed point theorem, see \cite{Schauder}, to find a solution of MKV equation \eqref{eq:mfsde}. Let us start with the application of Girsanov's theorem in the following lemma.
	
\begin{lemma}\label{lem:solutionAuxiliarySDE}
	Let $b: [0,T] \times \Hcal \times \Pcal_1(\Hcal) \to \Hcal$ be a measurable function such that $\Vert b_k \Vert_{\infty} \leq C_k \lambda_k$ for all $k\geq 1$, where $\frac{C}{\sqrt{1-\Hbb}} \in \ell^1$. Furthermore, assume that for every $k \in I_+$ the function $b_k$ fulfills assumption \eqref{eq:HoelderAssumptionb}. Then for every $\mu \in \Ccal^{\kappa}([0,T]; \Pcal_1(\Hcal))$, SDE \eqref{eq:auxiliarySDE} has a weak solution which is unique in law.
\end{lemma}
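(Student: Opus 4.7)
The approach is to apply Girsanov's theorem in the form of \Cref{cor:conditionsGirsanov}, treating the free noise $x + \Bb$ as the candidate solution. On the given complete filtered probability space $(\Omega, \Fcal, \Fbb, \Pbb)$, set $X_t^\mu := x + \Bb_t$. The moment bound $(\sum_k \lambda_k^2 (t-s)^{2H_k})^{1/2} \le \rho |t-s|^\kappa$, combined with Fernique-type Gaussian moment estimates for $\Bb_t - \Bb_s$ in $\Hcal$ and Kolmogorov's continuity criterion, shows that $X^\mu$ admits a modification with pathwise $\kappa'$-Hölder sample paths for every $\kappa' < \kappa$. Define the componentwise Girsanov shift
\[
u_s^{(k)} := - b_k(s, X_s^\mu, \mu_s)/\lambda_k, \quad k \ge 1,
\]
so that $|u_s^{(k)}| \le C_k$ by the boundedness assumption.

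Verifying the hypotheses of \Cref{cor:conditionsGirsanov}: for $k \in I_- \cup I_0$, the bound $\int_0^T |u_s^{(k)}|^2 ds \le T C_k^2$ is immediate. For $k \in I_+$, composing \eqref{eq:HoelderAssumptionb} with the $\kappa'$-Hölder regularity of $X^\mu$ and the $\kappa$-Hölder regularity of $\mu$ in the Kantorovich-Rubinstein metric shows that $u^{(k)}$ has Hölder continuous paths of order $\min(\gamma_k, \alpha_k \kappa', \beta_k \kappa)$, which exceeds $H_k - \tfrac12$ by choosing $\kappa'$ close enough to $\kappa$ and invoking the three assumptions $\gamma_k > H_k - \tfrac12$, $\kappa \alpha_k > 2H_k - 1$, and $\kappa \beta_k > H_k - \tfrac12$.

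The main technical obstacle is condition (ii$^\ast$): constructing an $\ell^1$-summable sequence $\{D_k\}_{k \ge 1}$ with $\int_0^T (\Kfrak_{H_k}^{-1}\!\int_0^\cdot u_r^{(k)}\,dr)^2(s)\,ds \le D_k$ $\Pbb$-almost surely. The case $k \in I_0$ gives $D_k \lesssim T C_k^2$ trivially. For $k \in I_-$, inserting the explicit form \eqref{eq:isometrySingularInverse} and using standard mapping estimates for left-sided Riemann-Liouville operators applied to the bounded process $u^{(k)}$ produces a pathwise bound of order $C_k^2/H_k$. For $k \in I_+$, the absolutely continuous form of $\Kfrak_{H_k}^{-1}$ involves the fractional derivative $D_{0+}^{H_k - 1/2}$; applying it to $s^{1/2 - H_k} u^{(k)}(s)$ is well-defined owing to the Hölder regularity above, and combining the pointwise and difference-quotient parts of the derivative produces a bound of order $C_k^2/(1 - H_k)$. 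Summing and invoking the assumption $C/\sqrt{1 - \Hbb} \in \ell^1$ gives $\sum_k D_k < \infty$. \Cref{thm:MFGirsanov} then yields a measure $\widetilde{\Pbb}$ with density $\Ecal_T$ under which $\widetilde{\Bb}_t := \Bb_t - \int_0^t b(s, X_s^\mu, \mu_s)\,ds$ is a weighted cylindrical fractional Brownian motion, so $(\Omega, \Fcal, \Fbb, \widetilde{\Pbb}, \widetilde{\Bb}, X^\mu)$ is a weak solution of \eqref{eq:auxiliarySDE}.

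For uniqueness in law, I would run the reverse Girsanov argument. On any weak solution $(\widetilde{\Omega}, \widetilde{\Fcal}, \widetilde{\Fbb}, \widetilde{\Pbb}, \widetilde{\Bb}, \widetilde{X})$, the shift $\widetilde{u}_s^{(k)} := b_k(s, \widetilde{X}_s, \mu_s)/\lambda_k$ satisfies the same Girsanov hypotheses (noting that $\widetilde{X} - x$ is a bounded-drift perturbation of $\widetilde{\Bb}$ and hence $\kappa'$-Hölder by the same Fernique-Kolmogorov reasoning). The resulting measure $\Qbb$ makes $\widetilde{X} - x$ a weighted cylindrical fractional Brownian motion, so the $\Qbb$-law of $\widetilde{X}$ coincides with the law of $x + \Bb$ and is uniquely determined. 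Since the Radon-Nikodym density $d\widetilde{\Pbb}/d\Qbb$ is a measurable functional of the path $\widetilde{X}$ alone (via inversion of the Volterra kernels $K_{H_k}$ and the definition of $\Ecal_T$), the $\widetilde{\Pbb}$-law of $\widetilde{X}$ is also uniquely determined, yielding uniqueness in law.
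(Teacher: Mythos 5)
Your architecture coincides with the paper's: take $X^\mu_t = x + \Bb_t$ as the candidate, verify conditions (i) and (ii$^*$) of \Cref{cor:conditionsGirsanov} for the shift $u^{(k)} = \pm\lambda_k^{-1} b_k(\cdot, X_\cdot^\mu, \mu_\cdot)$, and obtain uniqueness in law by inverting the Girsanov density along an arbitrary weak solution. Your pathwise verification of the H\"older condition for $k \in I_+$ (Kolmogorov applied to $\Bb$ itself, then composition with \eqref{eq:HoelderAssumptionb}) is in fact a cleaner route than the paper's, which bounds $\Ebb[|u_t^{(k)} - u_s^{(k)}|]$ and then invokes Kolmogorov's continuity theorem; and your uniqueness argument is the paper's reverse-Girsanov argument in slightly different words.

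There is, however, one step that fails as written: the claimed bound $D_k \sim C_k^2/H_k$ in the singular case $k \in I_-$. If that were the true order, the conclusion $\sum_k D_k < \infty$ would \emph{not} follow from $\frac{C}{\sqrt{1-\Hbb}} \in \ell^1$: for $k \in I_-$ one has $1 - H_k \in (\tfrac12, 1)$, so that hypothesis only controls $\sum_{k\in I_-} C_k$, and nothing in the lemma prevents $H_k \to 0$ along $I_-$ much faster than $C_k^2 \to 0$ (e.g. $C_k = 2^{-k}$, $H_k = 4^{-k}$ gives $C_k^2/H_k \equiv 1$). Fortunately the factor $1/H_k$ is spurious. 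Writing out \eqref{eq:isometrySingularInverse} for the absolutely continuous integral, $\Kfrak_{H_k}^{-1}\bigl(\int_0^\cdot u_r^{(k)}\,dr\bigr)(s) = \frac{s^{H_k - 1/2}}{\Gamma(1/2 - H_k)} \int_0^s (s-r)^{-1/2 - H_k}\, r^{1/2 - H_k}\, u_r^{(k)}\, dr$, and bounding $|u^{(k)}| \le C_k$, the $r$-integral evaluates to $s^{1 - 2H_k}$ times the Beta-function value $\Gamma(\tfrac32 - H_k)\Gamma(\tfrac12 - H_k)/\Gamma(2 - 2H_k)$; the $\Gamma(\tfrac12 - H_k)$ cancels and the remaining ratio $\Gamma(\tfrac32 - H_k)/\Gamma(2 - 2H_k)$ is bounded uniformly over $H_k \in (0, \tfrac12)$ since both arguments lie in $[1,2]$. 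Hence $\bigl|\Kfrak_{H_k}^{-1}(\cdot)(s)\bigr| \lesssim C_k\, s^{1/2 - H_k}$ and $\int_0^T \bigl|\Kfrak_{H_k}^{-1}(\cdot)(s)\bigr|^2\, ds \lesssim C_k^2\, T^{2 - 2H_k} \le C_k^2 T^2$, uniformly in $H_k$ — this is the bound the paper imports from \cite{banos2019infiniteSDE}, and with it (plus $\sum_k C_k < \infty$, which your hypothesis does give) the summability of $\{D_k\}$ and the rest of your argument close.
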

\begin{proof}
	Let $(\Omega,\Fcal,\Fbb,\Pbb)$ be a complete filtered probability space with a sequence of independent Brownian motions $\lbrace W^{(k)} \rbrace_{k\geq 1}$ defined thereon. Following the constructions in \Cref{sec:wcfBm}, we define the cylindrical fractional Brownian motion $B^\Hbb$ with associated sequence of Hurst parameters $\Hbb = \lbrace H_k \rbrace_{k\geq 1}$ generated by $W$. Further, we define the process $X_t^\mu := x + \sqrt{Q} B_t^\Hbb$, $t\in [0,T]$. If $u_t := \sqrt{Q}^{-1} b(t,X_t^\mu, \mu_t)$, $t\in[0,T]$, fulfills the assumptions of \Cref{cor:conditionsGirsanov}, we get due to \Cref{thm:MFGirsanov} that the process
	\begin{align*}
		B_t^{\Hbb,\mu} := B_t^\Hbb - \int_0^t \sqrt{Q}^{-1} b\left(u,x+B_u^\Hbb,\mu_u\right) du, \quad t\in[0,T],
	\end{align*}
	is a cylindrical fractional Brownian motion with respect to the probability measure $\Pbb^{\mu}$ defined by $\frac{d\Pbb^{\mu}}{d\Pbb} := \Ecal_T^{\mu}$, where 
	\begin{small}
	\begin{align}\label{eq:densityMu}
			\Ecal_T^{\mu} := \exp\left\lbrace \sum_{k\geq 1} \left( \int_0^T \Kfrak_{H_k}^{-1} \left( \int_0^\cdot u_r^{(k)} dr \right)(s) dW_s^{(k)} - \frac{1}{2} \int_0^T \Kfrak_{H_k}^{-1} \left( \int_0^\cdot u_r^{(k)} dr \right)^2(s) ds \right) \right\rbrace.
		\end{align}
		\end{small}
	Consequently, the sextuple ${(\Omega, \Fcal,\Fbb,\Pbb^{\mu}, \sqrt{Q} B^{\Hbb,\mu}, X^{\mu})}$ is a weak solution of SDE \eqref{eq:auxiliarySDE}. Thus it is left to show that $u$ fulfills the assumptions of \Cref{cor:conditionsGirsanov}. \par
	Let $k\in I_- \cup I_0$. Then, 
	\begin{align*}
		\int_0^T \vert u_s^{(k)} \vert^2 ds &= \int_0^T \vert \lambda_k^{-1} b_k(s, X_s^\mu, \mu_s) \vert^2 ds \leq T C_k^2 < \infty,
	\end{align*}
	where we have used that $b_k$ is bounded by $\lambda_k C_k$. Consider now the case $k\in I_+$, then we get for $t,s \in [0,T]$ that
	\begin{align}\label{eq:bHoelder}
	\begin{split}
		\Eabs{u_t^{(k)} - u_s^{(k)}} &= \lambda_k^{-1} \Eabs{ b_k (t, X_t^\mu, \mu_t) - b_k (s, X_s^\mu, \mu_s)} \\
		&\leq C_k \left( \vert t-s \vert^{\gamma_k} + \EW{\left\Vert \sqrt{Q}B_t^\Hbb - \sqrt{Q}B_s^\Hbb \right\Vert_\Hcal^{\alpha_k}} + \Kcal(\mu_t, \mu_s)^{\beta_k} \right) \\
		&\leq C_k \left( \vert t-s \vert^{\gamma_k} + \left(\sum_{j\geq 1} \EW{\lambda_j^2 \left\vert B_t^{H_j} - B_s^{H_j} \right\vert^2}\right)^{\frac{\alpha_k}{2}} + \vert t-s \vert^{\kappa \beta_k} \right) \\
		&\leq C_k \left( \vert t-s \vert^{\gamma_k} + \left( \sum_{j\geq 1}\lambda_j^2 \vert t - s\vert^{2H_j} \right)^{\frac{\alpha_k}{2}}  \right) \\
		&\lesssim C_k \left( \vert t-s \vert^{\gamma_k} + \vert t-s \vert^{\frac{\kappa \alpha_k}{2}} \right) \lesssim \vert t-s \vert^{\gamma_k} + \vert t-s \vert^{\frac{\kappa \alpha_k}{2}},
	\end{split}
	\end{align}
	where we have assumed without loss of generality that $\gamma_k = \kappa \beta_k$. Due to Kolmogorov's continuity theorem and the assumptions $\gamma_k > H_k - \frac{1}{2}$ and $2 \geq \kappa \alpha_k > 2 H_k - 1$, we get that $u^{(k)}$ is $(H_k - \frac{1}{2} + \varepsilon)$--Hölder continuous in $t\in[0,T]$ for some $\varepsilon>0$ and hence, assumption (i) of \Cref{thm:MFGirsanov} is fulfilled for all $k\geq 1$ due to \Cref{cor:conditionsGirsanov}. Next, we show that assumption (ii$^\ast$) holds, i.e. for all $k\geq 1$ 
	\begin{align*}
		\int_0^T \Kfrak_{H_k}^{-1} \left( \int_0^\cdot u_r^{(k)} dr \right)^2 (s) ds \leq D_k,
	\end{align*}
	where $D = \lbrace D_k \rbrace_{k\geq 1} \in \ell^1$. Consider first the case $k \in I_0$, then
	\begin{align*}
		\int_0^T \Kfrak_{H_k}^{-1} \left( \int_0^\cdot u_r^{(k)} dr \right)^2 (s) ds = \int_0^T \vert \lambda_k^{-1} b_k (s, X_s^\mu, \mu_s) \vert^2 ds \leq T C_k^2,
	\end{align*}
	and thus we define $D_k := T C_k^2$ for $k\in I_0$. In the case $k \in I_-$ it is shown in \cite{banos2019infiniteSDE} that
	\begin{align*}
		\int_0^T \Kfrak_{H_k}^{-1} \left( \int_0^\cdot u_r^{(k)} dr \right)^2 (s) ds \lesssim T^2 C_k^2,
	\end{align*}
	and thus we define $D_k := T^2 C_k^2$ for $k\in I_-$. Last, we consider the case $k \in I_+$ and get that
	\begin{small}
	\begin{align}\label{eq:calcRegularCase}
		&\left\vert \Kfrak_{H_k}^{-1}\left( \int_0^\cdot u_r^{(k)} dr \right)(s)\right\vert = \left\vert \Kfrak_{H_k}^{-1}\left( \int_0^\cdot \lambda_k^{-1} b_k (r, X_r^\mu, \mu_r) dr \right)(s)\right\vert \notag \\
		&\quad \leq \frac{C_k s^{\frac{1}{2}-H_k}}{\Gamma\left(\frac{3}{2}-H_k\right)} + \frac{\left( H_k - \frac{1}{2} \right) s^{H_k-\frac{1}{2}}}{\lambda_k \Gamma\left(\frac{3}{2}-H_k\right)} \left\vert \int_0^s \frac{b_k (s, X_s^\mu, \mu_s) s^{\frac{1}{2}-H_k} - b_k (r, X_r^\mu, \mu_r) r^{\frac{1}{2}-H_k}}{(s-r)^{H_k+\frac{1}{2}}} dr \right\vert \notag \\
		&\quad \leq \frac{C_k s^{\frac{1}{2}-H_k}}{\Gamma\left(\frac{3}{2}-H_k\right)} + \frac{\left( H_k - \frac{1}{2} \right) s^{H_k-\frac{1}{2}}}{\lambda_k \Gamma\left(\frac{3}{2}-H_k\right)} \left( \int_0^s \left\vert b_k (s, X_s^\mu, \mu_s) \right\vert \frac{r^{\frac{1}{2}-H_k} - s^{\frac{1}{2}-H_k}}{(s-r)^{H_k+\frac{1}{2}}} dr \right. \notag \\
		&\quad \left. + \int_0^s r^{\frac{1}{2}-H_k} \frac{\left\vert b_k (s, X_s^\mu, \mu_s) - b_k (r, X_r^\mu, \mu_r) \right\vert}{(s-r)^{H_k+\frac{1}{2}}} dr \right).
	\end{align}
	\end{small}
	Due to \eqref{eq:bHoelder} there exists $\varepsilon > 0$ such that for all $k \in I_+$
	\begin{align*}
		\left\vert b_k (s, X_s^\mu, \mu_s) - b_k (r, X_r^\mu, \mu_r) \right\vert \lesssim C_k \lambda_k \vert s-r \vert^{H_k - \frac{1}{2} + \varepsilon}.
	\end{align*}
	Thus, \eqref{eq:calcRegularCase} can be further bounded by
	\begin{align*}
		&\left\vert \Kfrak_{H_k}^{-1}\left( \int_0^\cdot u_r^{(k)} dr \right)(s)\right\vert \lesssim \frac{C_k s^{\frac{1}{2}-H_k}}{\Gamma\left(\frac{3}{2}-H_k\right)} + \frac{C_k \left( H_k - \frac{1}{2} \right) s^{H_k-\frac{1}{2}}}{\Gamma\left(\frac{3}{2}-H_k\right)} \notag \\
		&\qquad \times \left( \int_0^s \frac{r^{\frac{1}{2}-H_k} - s^{\frac{1}{2}-H_k}}{(s-r)^{H_k+\frac{1}{2}}} dr + \int_0^s r^{\frac{1}{2}-H_k} ( s-r )^{\varepsilon - 1} dr \right) \notag \\
		&\quad \leq \frac{C_k s^{\frac{1}{2}-H_k}}{\Gamma\left(\frac{3}{2}-H_k\right)} + \frac{C_k \left( H_k - \frac{1}{2} \right) s^{H_k-\frac{1}{2}}}{\Gamma\left(\frac{3}{2}-H_k\right)} \notag \\
		&\qquad \times \left( s^{1-2H_k} \int_0^1 \frac{u^{\frac{1}{2}-H_k} - 1}{(1-u)^{\frac{1}{2}+H_k}} du+ s^{\frac{1}{2} - H_k + \varepsilon} \beta\left( \frac{3}{2} - H_k, \varepsilon \right) \right) \notag \\
		&\quad \leq \frac{C_k s^{\frac{1}{2}-H_k}}{\Gamma\left(\frac{3}{2}-H_k\right)} + \frac{C_k \left( H_k - \frac{1}{2} \right) s^{\frac{1}{2}- H_k}}{\Gamma\left(\frac{3}{2}-H_k\right)} + \frac{C_k \left( H_k - \frac{1}{2} \right) s^{\varepsilon}}{\Gamma\left(\frac{3}{2}-H_k\right)} \beta\left( \frac{3}{2} - H_k, \varepsilon \right) \notag \\
		&\quad \lesssim C_k s^{\frac{1}{2}- H_k} + C_k.
	\end{align*}
	Here, we have used that
	\begin{align*}
		\sup_{\alpha \in \left(0,\frac{1}{2}\right)} \int_0^1 \frac{u^{-\alpha} - 1}{(1-u)^{\alpha+1}} du < \infty.
	\end{align*}
	Integrating the squared of the inverse kernel over the time interval $[0,T]$ yields
	\begin{align*}
		\int_0^T \left\vert \Kfrak_{H_k}^{-1}\left( \int_0^\cdot u_r^{(k)} dr \right)(s)\right\vert^2 ds \leq 2 C_k^2 \left( \int_0^T s^{1-2H_k} ds + 1 \right) \lesssim \frac{1}{1 - H_k} C_k^2,
	\end{align*}
	and thus we define $D_k := \frac{C_k^2}{1 - H_k}$ for $k\in I_+$. Finally, we see that $D \in \ell^1$. Indeed,
	\begin{align*}
		\sum_{k\geq 1} D_k = T \sum_{k\in I_0} C_k^2 + T^2 \sum_{k\in I_-} C_k^2 + \sum_{k\in I_+} \frac{C_k^2}{1 - H_k} \lesssim \sum_{k\geq 1} \frac{C_k^2}{1-H_k},
	\end{align*}
	which is finite by assumption. Thus the stochastic exponential $\Ecal_T^{\mu}$ is well-defined and gives the probability measure $\Pbb^\mu$. If $\Ecal_T^{\mu}$ is invertible, the solution of SDE \eqref{eq:auxiliarySDE} is unique in law. Indeed, let $X$ and $Y$ be two solutions of SDE\eqref{eq:auxiliarySDE} with respect to the measures $\Pbb$ and $\Qbb$, respectively. Then, we have for every bounded functional $f: \Hcal \to \Rbb$ that
	\begin{align*}
		\Ebb_{\Pbb}[f(X)] = \Ebb_{\Pbb^\mu}\left[f\left(x+\sqrt{Q} B^{\Hbb,\mu}\right)\eta_T\right] = \Ebb_{\Qbb}[f(Y)],
	\end{align*}
	and thus $X$ and $Y$ have the same law. Here,
	\begin{small}
	\begin{align*}
		\eta_T := \exp\left\lbrace \sum_{k\geq 1} \left( -\int_0^T \Kfrak_{H_k}^{-1} \left( \int_0^\cdot u_r^{(k)} dr \right)(s) d\widetilde{W}_s^{(k)} - \frac{1}{2} \int_0^T \Kfrak_{H_k}^{-1} \left( \int_0^\cdot u_r^{(k)} dr \right)^2(s) ds \right) \right\rbrace,
	\end{align*}
	\end{small}
is the inverse of $\Ecal_T^\mu$, where $\widetilde{W} = \lbrace \widetilde{W}^{(k)} \rbrace_{k\geq 1}$ is a sequence of independent Brownian motions with respect to the measure $\Pbb^\mu$ which generate the fractional Brownian motions $\lbrace B^{H_k, \mu} \rbrace_{k\geq 1}$. \par
	 In order to show that $\eta_T$ is well-defined it suffices by \Cref{cor:conditionsGirsanov} to prove that the assumptions (i) and (ii$^*$) are fulfilled. Due to the proof of the existence of a weak solution of SDE \eqref{eq:auxiliarySDE}, in particular the derivation in \eqref{eq:bHoelder}, it suffices to show that for every $k\in I_+$
	\begin{align*}
		\EW{ \vert X_t^{(k), \mu} - X_s^{(k), \mu} \vert^2} \lesssim \vert t-s \vert^{2H_k}.
	\end{align*}
	Using Hölder's inequality and the fact that $X^\mu$ solves the SDE \eqref{eq:auxiliarySDE} we get for every $k\in I_+$ that
	\begin{align*}
		\Ebb_{\Pbb^\mu} \left[\vert X_t^{(k),\mu} - X_s^{(k),\mu} \vert^2\right] &= \Ebb_{\Pbb^\mu} \left[ \left\vert \int_s^t b_k (r,X_r^\mu, \mu_r) dr + \lambda_k B_t^{H_k, \mu} - \lambda_k B_s^{H_k, \mu} \right\vert^2 \right] \\
		&\lesssim \left( C_k^2 \lambda_k^2 \vert t-s \vert^2 + \lambda_k^2 \vert t-s \vert^{2H_k} \right) \lesssim \vert t-s \vert^{2H_k}.
	\end{align*}
	Consequently, $\Ecal_T^\mu$ is invertible and thus the solution is unique in law.
\end{proof}

As a direct consequence of the proof of \Cref{lem:solutionAuxiliarySDE} we get under the assumption that there are no Hurst parameters of the regular case, i.e. $I_+ = \emptyset$, existence and uniqueness (in law) of a solution for an even broader class of drift coefficients $b$ and measures $\mu$.

\bigskip

\begin{corollary}\label{cor:solutionAuxiliarySDE}
	Assume  $I_+ = \emptyset$. Let $b: [0,T] \times \Hcal \times \Pcal_1(\Hcal) \to \Hcal$ be a measurable function such that $\Vert b_k \Vert_{\infty} \leq C_k \lambda_k$ for all $k\geq 1$, where $C \in \ell^1$. Then SDE \eqref{eq:auxiliarySDE} has a weak solution which is unique in law for every $\mu \in \Ccal([0,T]; \Pcal_1(\Hcal))$.
\end{corollary}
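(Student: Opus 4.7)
The plan is to revisit the proof of \Cref{lem:solutionAuxiliarySDE} and observe that each step which required either Hölder regularity of $b_k$, Hölder regularity of $\mu$, or the stronger summability $\frac{C}{\sqrt{1-\Hbb}} \in \ell^1$, was invoked only to handle indices in the regular case $I_+$. Under the hypothesis $I_+ = \emptyset$ these steps become vacuous, and the remaining arguments go through under the weaker hypotheses of the corollary, so the proof is essentially a bookkeeping exercise extracting the $I_- \cup I_0$ content of \Cref{lem:solutionAuxiliarySDE}.

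More concretely, I would set $u_t := \sqrt{Q}^{-1} b(t, X_t^\mu, \mu_t)$ with $X_t^\mu := x + \sqrt{Q} B_t^\Hbb$ and verify the two hypotheses of \Cref{cor:conditionsGirsanov} so that \Cref{thm:MFGirsanov} applies and produces the weak solution. For condition (i), the remark following \Cref{thm:MFGirsanov} states that for $k \in I_- \cup I_0$ it suffices to establish $\int_0^T \vert u_s^{(k)} \vert^2 ds < \infty$, which is immediate from the pointwise bound $\vert u_s^{(k)} \vert \leq C_k$; in particular no pathwise regularity of $u^{(k)}$ is needed, so continuity of $\mu$ without any Hölder exponent is enough and no Hölder assumption on $b$ is used. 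For condition (ii$^*$), the computations already carried out in the proof of \Cref{lem:solutionAuxiliarySDE} yield $\int_0^T \bigl\vert \Kfrak_{H_k}^{-1}(\int_0^\cdot u_r^{(k)} dr)(s) \bigr\vert^2 ds \leq T C_k^2$ for $k \in I_0$ and $\lesssim T^2 C_k^2$ for $k \in I_-$. Taking $D_k \lesssim T^2 C_k^2$ uniformly, the summability $D \in \ell^1$ reduces to $C \in \ell^2$, which follows from the elementary embedding $\ell^1 \hookrightarrow \ell^2$. Hence \Cref{cor:conditionsGirsanov} applies and delivers both the weak solution and the equivalent measure $\Pbb^\mu$.

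For uniqueness in law I would replicate the final Girsanov step from the proof of \Cref{lem:solutionAuxiliarySDE}, where one shows that the inverse density $\eta_T$ of $\Ecal_T^\mu$ is well-defined. The only place in that step where a separate increment estimate on the components $X^{(k),\mu}$ was required was the regular case $k \in I_+$, in order to promote Hölder regularity of $b_k$ into Hölder regularity of $u^{(k)}$ under $\Pbb^\mu$; since $I_+ = \emptyset$, this substep is vacuous. The remaining uniform bounds on $\vert u_s^{(k)} \vert$ depend only on $\Vert b_k \Vert_\infty$ and transfer verbatim to the process driven by the independent $\Pbb^\mu$-Brownian motions $\widetilde{W}^{(k)}$, so $\eta_T$ is well-defined, $\Ecal_T^\mu$ is invertible, and the change-of-measure argument closes exactly as in \Cref{lem:solutionAuxiliarySDE}. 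The entire difficulty of that lemma was concentrated in the regular case, so in the present corollary there is no substantive obstacle beyond identifying which estimates remain relevant once $I_+$ is removed.
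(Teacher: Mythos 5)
Your proposal is correct and matches the paper's own treatment: the paper gives no separate proof of \Cref{cor:solutionAuxiliarySDE}, presenting it exactly as you do, namely as the $I_-\cup I_0$ content of the proof of \Cref{lem:solutionAuxiliarySDE}, where boundedness alone gives condition (i), the bounds $D_k \leq TC_k^2$ (Brownian case) and $D_k \lesssim T^2C_k^2$ (singular case) give condition (ii$^*$) with $D\in\ell^1$ via $C\in\ell^1\subset\ell^2$, and the invertibility of $\Ecal_T^\mu$ needed for uniqueness in law requires no increment estimate once $I_+=\emptyset$. Your observation that the Hölder continuity of $\mu$ can be dropped entirely is likewise the reason the corollary is stated for all $\mu\in\Ccal([0,T];\Pcal_1(\Hcal))$.
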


\bigskip

Next, we come to the second step of the proof of \Cref{thm:weakSolution}, namely the application of Schauder's fixed point theorem, see \cite{Schauder}.

\begin{proof}[Proof of \Cref{thm:weakSolution}]
		Define $E := \Ccal^\kappa([0,T];\Pcal_1(\Hcal)) \subset \Ccal([0,T];\Mcal_1(\Hcal))$. Then \Cref{lem:solutionAuxiliarySDE} yields that SDE \eqref{eq:auxiliarySDE} has a weak solution $X^\mu$ which is unique in law for every $\mu \in E$. \par 
		Consider the function $\psi: E \to \Ccal([0,T];\Mcal_1(\Hcal))$ defined by
		\begin{align*}
			\psi_s(\mu) := \Pbb_{X_s^{\mu}}^{\mu}, \quad s\in [0,T].
		\end{align*}
		If $\psi$ has a fixed point, i.e. $\mu_s^* = \psi_s(\mu^*) = \Pbb_{X_s^{\mu^*}}^{\mu^*}$, $s\in [0,T]$, we can insert $\mu^*$ in SDE \eqref{eq:auxiliarySDE} and  consequently get a weak solution of MKV equation \eqref{eq:mfsde}. In order to apply Schauder's fixed point theorem we have to verify that $(E,\Vert \cdot \Vert_{\Kcal^*})$ is convex, $\psi$ is continuous, and it exists a compact subset $G$ of $E$ such that $\psi(E) \subset G \subset E$. \par
	\emph{$(E,\Vert \cdot \Vert_{\Kcal^*})$ is convex.} This is an immediate consequence of the definition of $E$ and the fact that the Kantorovich-Rubinstein metric $\Kcal$ is induced by the Kantorovich norm $\Vert \cdot \Vert_{\Kcal}$. \par 
	\emph{$\psi$ is continuous.} Consider an arbitrary $\mu \in E$ and let $\varepsilon > 0$. Due to the continuity assumption \eqref{eq:continuousUniformly} on $b$, we can find $\delta>0$ such that for every $\nu \in E$ with $\sup_{t\in [0,T]}\Kcal(\mu_t,\nu_t) < \delta$
		\begin{align*}
			\sup_{t\in[0,T], y\in \Hcal} \left\vert b_k (t,y,\mu_t) - b_k (t,y,\nu_t) \right\vert < C_k \lambda_k \varepsilon, \quad k\geq 1.
		\end{align*}
		Consequently, we get by the measure change defined in \eqref{eq:densityMu} and Cauchy-Schwarz' inequality that
		\begin{align*}
			\Kcal&(\psi_t(\mu),\psi_t(\nu)) = \sup_{h \in \BL(\Hcal;\Rbb)} \left\vert \int_{\Hcal} h(y) \Pbb_{X_t^{\mu}}^{\mu}(dy) - \int_{\Hcal} h(y) \Pbb_{X_t^{\nu}}^{\nu}(dy) \right\vert \\
			&= \sup_{h\in \BL(\Hcal;\Rbb)} \left\vert \Ebb \left[ \left(h\left(\Bb_t^x \right) - h(x)\right)\Ecal_T^{\mu} \right] - \Ebb \left[ \left(h\left(\Bb_t^x \right) - h(x)\right)\Ecal_T^{\nu} \right] \right\vert\\
			&\leq \Ebb \left[ \left\Vert \Bb_t \right\Vert_{\Hcal} \left\vert \Ecal_T^{\mu} - \Ecal_T^{\nu} \right\vert \right] \leq \Ebb \left[ \left\Vert \Bb_t \right\Vert_{\Hcal}^2 \right]^{\frac{1}{2}} \Ebb\left[\left\vert \Ecal_T^{\mu} - \Ecal_T^{\nu} \right\vert^2 \right]^{\frac{1}{2}}.
		\end{align*}
		Note that $\sup_{t \in [0,T]}\Ebb \left[ \left\Vert \Bb_t \right\Vert_{\Hcal}^2 \right]$ is finite due to \Cref{lem:continuity&L2}. We now employ the inequality
		\begin{align}\label{eq:exponentialInequality}
			\left\vert e^x - e^y \right\vert \leq \vert x-y \vert \left(e^x + e^y\right), \quad x,y \in \Rbb.
		\end{align}
		Since $\Ecal_T^{\mu} \in L^p(\Omega)$ for every $\mu \in E$ and $1 \leq p < \infty$ by \Cref{lem:continuity&L2}, we get again by Cauchy-Schwarz' and Minkowski's inequality that
		\begin{align*}
			 &\Ebb\left[\vert \Ecal_T^{\mu} - \Ecal_T^{\nu} \vert^2 \right]^{\frac{1}{2}} \lesssim \Ebb \left[ \left\vert \sum_{k\geq 1} \int_0^T \lambda_k^{-1} \left( \Kfrak_{H_k}^{-1} \left( \int_0^{\cdot} b_k \left(u,\Bb_u^x, \mu_u\right) du \right)(s) \right.\right.\right. \\
			 &\quad \left.\left.\left.- \Kfrak_{H_k}^{-1} \left( \int_0^{\cdot} b_k \left(u,\Bb_u^x, \nu_u\right) du \right)(s)  \right) dW_s^{(k)}\right\vert^4 \right]^{\frac{1}{4}} \\
			 &\quad + \frac{1}{2} \Ebb \left[ \left\vert \sum_{k\geq 1} \int_0^T \lambda_k^{-2} \left( \Kfrak_{H_k}^{-1} \left( \int_0^{\cdot} b_k \left(u,\Bb_u^x, \mu_u\right) du \right)^2(s) \right.\right.\right.\\
			 &\quad \left.\left.\left.- \Kfrak_{H_k}^{-1} \left( \int_0^{\cdot} b_k \left(u,\Bb_u^x, \nu_u\right) du \right)^2(s)  \right) ds \right\vert^4 \right]^{\frac{1}{4}} =: A + B.
		\end{align*}
		For $A$ we get equivalently to \Cref{lem:solutionAuxiliarySDE} using the linearity of $\Kfrak_H^{-1}$ for every $H\in (0,1)$ and Burkholder-Davis-Gundy's inequality that
		\begin{align*}
			 A &\lesssim \Ebb \left[ \sum_{k\geq 1} \left( \int_0^T \frac{1}{\lambda_k^{2}} \Kfrak_{H_k}^{-1} \left( \int_0^{\cdot} b_k \left(u,\Bb_u^x, \mu_u\right) - b_k \left(u,\Bb_u^x, \nu_u\right) du \right)^2(s)  ds \right)^2 \right]^{\frac{1}{4}} \\
			 &\lesssim  \left( \sum_{k\geq 1} D_k \varepsilon^2 \right)^{\frac{1}{2}} \lesssim \varepsilon.
		\end{align*}
		For $B$ note that 
		\begin{align*}
			B &\lesssim \Ebb \left[ \left\vert \sum_{k\geq 1} \frac{1}{\lambda_k^2}\int_0^T \left( \Kfrak_{H_k}^{-1} \left( \int_0^{\cdot} b_k \left(u,\Bb_u^x, \mu_u\right) + b_k \left(u,\Bb_u^x, \nu_u\right) du \right)(s)  \right) \right. \right.\\
			&\quad \times \left. \left. \left( \Kfrak_{H_k}^{-1} \left( \int_0^{\cdot} b_k \left(u,\Bb_u^x, \mu_u\right) - b_k \left(u,\Bb_u^x, \nu_u\right) du \right)(s)  \right)ds \right\vert^4 \right]^{\frac{1}{4}},
		\end{align*}
		which can be bounded equivalently to $A$. Hence, $\psi$ is continuous. \par
	\emph{$\psi$ maps $E$ onto itself.} It suffices to show that  for every $\mu \in E$
		\begin{align*}
			\Kcal(\psi_t(\mu), \psi_s(\mu)) \lesssim \vert t-s \vert^\kappa.
		\end{align*}
		Let $\mu \in E$ be arbitrary and without loss of generality $s<t$. Then we get
		\begin{align*}
			\Kcal(\psi_t(\mu), \psi_s(\mu)) &= \sup_{h\in \BL(\Hcal;\Rbb)} \left\vert \Ebb\left[ h(X_t^{\mu})-h(X_s^{\mu}) \right] \right\vert \leq \Ebb \left[ \left\Vert X_t^{\mu} - X_s^{\mu} \right\Vert_{\Hcal}^2 \right]^{\frac{1}{2}}\notag \\
			&= \Ebb \left[ \left\Vert \int_s^t b\left(u,X_u^{\mu},\mu_u\right) du + \Bb_t - \Bb_s \right\Vert_{\Hcal}^2 \right]^{\frac{1}{2}} \notag\\
			&\leq \left( \sum_{k\geq 1} C_k^2 \lambda_k^2 \right)^{\frac{1}{2}} (t-s) + \left( \sum_{k\geq 1} \lambda_k^2 \Ebb \left[ \left\vert B_t^{H_k,\mu}-B_s^{H_k,\mu} \right\vert^2 \right]\right)^{\frac{1}{2}} \notag\\
			&\leq \left( \sum_{k\geq 1} C_k^2 \lambda_k^2 \right)^{\frac{1}{2}} (t-s) + \left( \sum_{k\geq 1} \lambda_k^2 (t-s)^{2H_k} \right)^{\frac{1}{2}} \lesssim \vert t-s \vert^\kappa.
		\end{align*}	
	\emph{$\exists G \subset E$ compact such that $\psi(E) \subset G \subset E$.} Define
		\begin{align*}
			\Delta := \left\lbrace \Pbb_{X_s^{\mu}}^{\mu}, ~s\in [0,T], ~\mu \in \Ccal^\kappa([0,T];\Pcal_1(\Hcal)) \right\rbrace \subset \Pcal_1(\Hcal).
		\end{align*}
		By the last step, we already know that for $s,t \in [0,T]$ and $\mu \in \Ccal^\kappa([0,T]; \Pcal_1(\Hcal))$,
		\begin{align*}
			\Kcal(\Pbb_{X_t^{\mu}}^{\mu}, \Pbb_{X_s^{\mu}}^{\mu}) \lesssim \vert t-s\vert^\kappa.
		\end{align*}
		Hence, $\psi(E) \subset G := \Ccal^\kappa([0,T]; \overline{\Delta}) \subset E$, where $\overline{\Delta}$ is the closure of $\Delta$ with respect to the Kantorovich-Rubinstein metric. If we can show that $\Delta$ is relatively compact, then $G$ will be compact. \par
		Indeed, note first that $G$ is a closed set of equicontinuous functions. Moreover, for every $s\in [0,T]$ the set
		\begin{align*}
			G_s := \left\lbrace \Pbb_{X_s^{\mu}}^{\mu}, ~\mu \in \Ccal^\kappa([0,T];\Pcal_1(\Hcal)) \right\rbrace \subset \overline{\Delta}
		\end{align*}
		is relatively compact due to the compactness of $\overline{\Delta}$. Hence, we can apply Arzelá-Ascoli's theorem which shows the compactness of $G$ with respect to the metric induced by $\Vert \cdot \Vert_{\Kcal^*}$.\par 
		In order to show relatively compactness of $\Delta$, note first that relatively compactness of $\Delta$ is equivalent to tightness of $\Delta$. Tightness of $\Delta$ then again is implied by uniformly integrability of the set
		\begin{align*}
			\mathcal{X} := \lbrace X_s^{\mu}, ~s\in[0,T], ~\mu\in \Ccal^\kappa([0,T];\Pcal_1(\Hcal)) \rbrace.
		\end{align*}
		Hence, it suffices to show that 
		\begin{align*}
			\sup_{s\in[0,T]} \sup_{\mu \in \Ccal^\kappa([0,T];\Pcal_1(\Hcal))} \Ebb \left[ \Vert X_s^{\mu} \Vert_{\Hcal}^2 \right] < \infty,
		\end{align*}
		but this follows directly due to \Cref{lem:continuity&L2} and the observation
		\begin{align*}
			\Ebb \left[ \Vert X_s^{\mu} \Vert_{\Hcal}^2 \right] &= \Ebb \left[ \Vert x + \int_0^s b(r, X_r^{\mu}, \mu_r) dr + \Bb_s \Vert_{\Hcal}^2 \right] \lesssim \Vert x \Vert_\Hcal^2 + T^2 \Vert C \lambda \Vert_{\ell^2} + \Vert \Bb_s \Vert_{\Hcal}^2.
		\end{align*}
		Finally, we can apply Schauder's fixed point theorem, which yields a fixed point $\mu^* = \psi(\mu^*) = \Pbb_{X^{\mu^*}}^{\mu^*}$. Define $\Pbb:= \Pbb^{\mu^*}$, $X:= X^{\mu^*}$ and $B^\Hbb := B^{\Hbb,\mu^*}$. Then, $(\Omega, \Fcal, \Fbb, \Pbb, B^\Hbb, X)$ is a weak solution of MKV equation \eqref{eq:mfsde}.
\end{proof}

For the case $I_+ = \emptyset$ we get an immediate extension of \Cref{thm:weakSolution}.

\begin{corollary}
	Assume $I_+ = \emptyset$. Let $b: [0,T] \times \Hcal \times \Pcal_1(\Hcal) \to \Hcal$ be a measurable function such that $\Vert b_k \Vert_{\infty} \leq C_k \lambda_k$ for all $k\geq 1$, where $C \in \ell^1$, and assume that $b$ is continuous in the sense of \eqref{eq:continuousUniformly}. Then, MKV equation \eqref{eq:mfsde} has a weak solution.
\end{corollary}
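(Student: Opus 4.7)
The plan is to follow exactly the same architecture as the proof of \Cref{thm:weakSolution}, substituting \Cref{cor:solutionAuxiliarySDE} for \Cref{lem:solutionAuxiliarySDE}. Under the weaker hypotheses $I_+=\emptyset$ and $C\in\ell^1$, \Cref{cor:solutionAuxiliarySDE} delivers, for every $\mu$ in the larger space $E:=\Ccal([0,T];\Pcal_1(\Hcal))$ (no Hölder-in-time required), a law-unique weak solution $(\Omega,\Fcal,\Fbb,\Pbb^\mu,\sqrt{Q}B^{\Hbb,\mu},X^\mu)$ of \eqref{eq:auxiliarySDE} with density $\Ecal_T^\mu$. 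I then study the Schauder map
\begin{align*}
\psi : E \to \Ccal([0,T];\Mcal_1(\Hcal)),\qquad \psi_t(\mu):=\Pbb^\mu_{X_t^\mu},
\end{align*}
and verify the three Schauder ingredients: convexity of $E$, continuity of $\psi$, and existence of a compact $G\subset E$ with $\psi(E)\subset G$.

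Convexity of $E$ is immediate. The continuity of $\psi$ is essentially a copy of the corresponding step in the proof of \Cref{thm:weakSolution}: a Cauchy--Schwarz reduction, combined with the inequality $|e^x-e^y|\leq |x-y|(e^x+e^y)$ and the $L^p$-bound on $\Ecal_T^\mu$ from \Cref{cor:conditionsGirsanov}, reduces the estimate on $\Kcal(\psi_t(\mu),\psi_t(\nu))$ to terms of the form $\int_0^T \Kfrak_{H_k}^{-1}\bigl(\int_0^\cdot [b_k(u,\cdot,\mu_u)-b_k(u,\cdot,\nu_u)]\,du\bigr)^2(s)\,ds$. Since $I_+=\emptyset$, the computations carried out in the $k\in I_-$ and $k\in I_0$ cases of \Cref{lem:solutionAuxiliarySDE} apply verbatim and bound these integrals by $T^2 C_k^2 \varepsilon^2$ or $T C_k^2 \varepsilon^2$; the summability over $k$ is guaranteed by $C\in\ell^1\subset\ell^2$. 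Thus the continuity assumption \eqref{eq:continuousUniformly} on $b$ transfers directly into continuity of $\psi$.

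The main obstacle is the compactness step, which in \Cref{thm:weakSolution} relied on the polynomial modulus $(\sum_k\lambda_k^2|t-s|^{2H_k})^{1/2}\lesssim|t-s|^\kappa$; this is no longer assumed. For the equicontinuity of $\psi(E)$ I will still use
\begin{align*}
\Kcal(\psi_t(\mu),\psi_s(\mu)) \leq \Ebb\bigl[\|X_t^\mu - X_s^\mu\|_\Hcal^2\bigr]^{1/2} \lesssim |t-s| + \Bigl(\sum_{k\geq 1}\lambda_k^2 |t-s|^{2H_k}\Bigr)^{1/2},
\end{align*}
and then argue that this right-hand side, call it $\omega(|t-s|)$, tends to $0$ as $|t-s|\to 0$ independently of $\mu$: each summand $\lambda_k^2|t-s|^{2H_k}\to 0$ (since $H_k>0$) and is dominated by $\lambda_k^2 T$, which is summable because $\lambda\in\ell^1\subset\ell^2$, so dominated convergence applies. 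This shows $\psi(\mu)\in E$ and that $\psi(E)$ is equicontinuous in time with a modulus independent of $\mu$.

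For pointwise relative compactness of $\Delta:=\{\psi_s(\mu):s\in[0,T],\mu\in E\}$ in $(\Pcal_1(\Hcal),\Kcal)$, which is equivalent to tightness plus uniform integrability of the first moment, I will invoke \Cref{lem:continuity&L2} together with $\Vert b_k\Vert_\infty\leq C_k\lambda_k$ to get uniform $L^2$ bounds on $X_s^\mu$, yielding uniform integrability. Tightness in $\Hcal$ comes from the componentwise tail estimate
\begin{align*}
\Ebb\Bigl[\sum_{k>N}|X_s^{(k),\mu}|^2\Bigr] \lesssim \sum_{k>N}|x^{(k)}|^2 + T^2\sum_{k>N} C_k^2\lambda_k^2 + T\sum_{k>N}\lambda_k^2 \xrightarrow{N\to\infty} 0,
\end{align*}
where summability is guaranteed by $x\in\Hcal$, $\lambda\in\ell^1$, and the boundedness of $C$ and $\lambda$. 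A standard Arzelà--Ascoli argument then produces a compact $G\supset\psi(E)$ inside $E$, and Schauder's theorem yields $\mu^*=\psi(\mu^*)$; plugging $\mu^*$ into \eqref{eq:auxiliarySDE} gives the desired weak solution of the MKV equation \eqref{eq:mfsde}.
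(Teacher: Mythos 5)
Your proof is correct and follows essentially the same route as the paper, which simply reruns the Schauder argument of \Cref{thm:weakSolution} with \Cref{cor:solutionAuxiliarySDE} in place of \Cref{lem:solutionAuxiliarySDE} and with the sets $E$ and $G$ cut out by the modulus $\omega(\vert t-s\vert) = \left( \sum_{k\geq 1} C_k^2\lambda_k^2 \right)^{1/2}\vert t-s\vert + \left( \sum_{k\geq 1}\lambda_k^2\vert t-s\vert^{2H_k}\right)^{1/2}$ replacing $\vert t-s\vert^{\kappa}$; your choice of $E$ as the full space $\Ccal([0,T];\Pcal_1(\Hcal))$ rather than the modulus-restricted subset is an immaterial variant, since $\psi(E)$ lands in the modulus-restricted set either way. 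The one point where you go beyond the paper is the explicit verification that $\omega(\delta)\to 0$ by dominated convergence and, more substantively, the componentwise tail estimate for tightness of $\Delta$ in $\Hcal$ --- a welcome addition, since in infinite dimensions a uniform second-moment bound alone does not yield tightness, a detail the paper's proof (inherited from \Cref{thm:weakSolution}) leaves implicit.
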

\begin{proof}
	The proof is analog to the proof of \Cref{thm:weakSolution}, where we define the sets
	\begin{footnotesize}
		\begin{align*}
			E:= \left\lbrace \mu \in \Ccal([0,T];\Pcal_1(\Hcal)) : \Kcal(\mu_t,\mu_s) \leq \left( \sum_{k\geq 1} C_k^2 \lambda_k^2 \right)^{\frac{1}{2}} (t-s) + \left( \sum_{k\geq 1} \lambda_k^2 (t-s)^{2H_k} \right)^{\frac{1}{2}} \right\rbrace,
		\end{align*}
		\end{footnotesize}
		and
		\begin{footnotesize}
		\begin{align*}
			G := \left\lbrace \mu \in \Ccal([0,T]; \overline{\Delta}) : \Kcal(\mu_t,\mu_s) \leq \left( \sum_{k\geq 1} C_k^2 \lambda_k^2 \right)^{\frac{1}{2}} (t-s) + \left( \sum_{k\geq 1} \lambda_k^2 (t-s)^{2H_k} \right)^{\frac{1}{2}} \right\rbrace.
		\end{align*}
		\end{footnotesize}
\end{proof}

Concluding this section we show that under slightly more regularity in the law variable of the drift $b$ we get a solution which is unique in law.

\begin{theorem}\label{thm:weakUniqueness}
	Suppose the assumptions of \Cref{thm:weakSolution} are fulfilled and in addition that $\sup_{k\in I_+} H_k < 1$. Furthermore, for every $k\geq 1$ assume that for all $\mu, \nu \in \Pcal_1(\Hcal)$
	\begin{align}\label{eq:lipschitzUniformly}
		\sup_{t\in[0,T], y\in \Hcal}\left\vert b_k (t,y,\mu)-b_k (t,y,\nu) \right\vert \leq C_k \lambda_k \Kcal(\mu,\nu).
	\end{align}
	Then, MKV equation \eqref{eq:mfsde} has a weak solution which is unique in law.
\end{theorem}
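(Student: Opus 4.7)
The plan is to combine Girsanov's theorem with a Grönwall-type argument, mirroring the route used in \cite{Bauer_MultiDim,Bauer_StrongSolutionsOfMFSDEs}. First I would take two weak solutions $(\Omega^i,\Fcal^i,\Fbb^i,\Pbb^i,\Bb^i,X^i)$, $i=1,2$, of MKV equation \eqref{eq:mfsde} with the same initial law and set $\mu^i_t := \Pbb^i_{X^i_t}$. By the Hölder estimate for $\psi(\mu)$ established in the proof of \Cref{thm:weakSolution}, each $\mu^i$ lies in $E=\Ccal^\kappa([0,T];\Pcal_1(\Hcal))$, and each $X^i$ is a weak solution of SDE \eqref{eq:auxiliarySDE} with measure parameter $\mu^i$. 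Since weak solutions of that SDE are unique in law by \Cref{lem:solutionAuxiliarySDE}, the whole statement reduces to showing $\mu^1 = \mu^2$ in $\Ccal([0,T];\Pcal_1(\Hcal))$.

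To compare the two measure flows I would pass to a common probability space carrying a weighted cylindrical fractional Brownian motion $\Bb$, set $\Bb^x_t := x + \Bb_t$, and apply \Cref{thm:MFGirsanov} twice with drifts $u^i_t := \sqrt{Q}^{-1} b(t,\Bb^x_t,\mu^i_t)$ to obtain probability measures $\Pbb^{\mu^i}$ with densities $\Ecal_T^{\mu^i}$ as in \eqref{eq:densityMu} such that $\Bb^x$ solves SDE \eqref{eq:auxiliarySDE} for $\mu = \mu^i$ under $\Pbb^{\mu^i}$. Using $\Pbb^i_{X^i_t} = \Pbb^{\mu^i}_{\Bb^x_t}$, replacing $h$ by $h - h(x)$ inside the expectation (which does not alter the difference since both $\mu^i_t$ are probability measures), and invoking Cauchy--Schwarz together with \Cref{lem:continuity&L2}, I obtain
\begin{align*}
\Kcal(\mu^1_t,\mu^2_t) \leq \EW{\Vert \Bb_t \Vert_\Hcal \vert \Ecal_T^{\mu^1} - \Ecal_T^{\mu^2} \vert} \lesssim \EW{\vert \Ecal_T^{\mu^1} - \Ecal_T^{\mu^2} \vert^2}^{\frac{1}{2}}.
\end{align*}

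To control the right-hand side I would apply the elementary inequality $\vert e^x - e^y \vert \leq \vert x-y \vert (e^x + e^y)$ and split as in the continuity step of the proof of \Cref{thm:weakSolution} into a stochastic-integral term $A$ and a quadratic-variation term $B$. The new Lipschitz hypothesis \eqref{eq:lipschitzUniformly} lets me replace the uniform factor $\varepsilon$ that appeared there by the pointwise quantity $\Kcal(\mu^1_r,\mu^2_r)$ inside the $dr$-integral produced by $\Kfrak_{H_k}^{-1}$. After squaring, using the $L^p$-bounds on $\Ecal_T^{\mu^i}$ from \Cref{cor:conditionsGirsanov}, the summability $C/\sqrt{1-\Hbb}\in\ell^1$, and the new assumption $\sup_{k\in I_+} H_k < 1$ (which keeps $\Gamma(3/2-H_k)^{-1}$ and the beta factors arising in \eqref{eq:calcRegularCase} uniformly bounded in $k$), this should yield a Volterra-type inequality
\begin{align*}
\Kcal(\mu^1_t,\mu^2_t)^2 \leq L \int_0^t \Kcal(\mu^1_s,\mu^2_s)^2 \, ds, \quad t\in[0,T],
\end{align*}
with $L$ independent of $t$. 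Grönwall's lemma then forces $\Kcal(\mu^1_t,\mu^2_t) \equiv 0$, and together with the reduction of the first paragraph this gives $\Pbb^1_{X^1} = \Pbb^2_{X^2}$.

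The hard part will be the derivation of the Volterra inequality above: the operator $\Kfrak_{H_k}^{-1}$ is nonlocal for $k \in I_-\cup I_+$, so the difference $\Kfrak_{H_k}^{-1}\bigl(\int_0^\cdot [b_k(r,\Bb^x_r,\mu^1_r) - b_k(r,\Bb^x_r,\mu^2_r)]\,dr\bigr)(s)$ couples $\Kcal(\mu^1_r,\mu^2_r)$ to a singular kernel in $(s,r)$ rather than giving a pointwise bound at $s$. Reorganising via Fubini and the beta-function estimates already used in the proof of \Cref{lem:solutionAuxiliarySDE}, uniformly in $k$, is the technical heart of the argument, and it is precisely the bound $\sup_{k\in I_+} H_k < 1$ that keeps the resulting $k$-dependent constants summable so that the Grönwall step closes.
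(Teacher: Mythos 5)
Your overall strategy is the paper's: both proofs run Girsanov's theorem to express the relevant laws through stochastic exponentials and then close a Grönwall-type loop for $t\mapsto\Kcal(\Pbb^1_{X^1_t},\Pbb^2_{X^2_t})$ using the Lipschitz hypothesis \eqref{eq:lipschitzUniformly}. Your organization differs slightly but legitimately: you pull both solutions back to one reference space and compare the two densities $\Ecal_T^{\mu^1}$ and $\Ecal_T^{\mu^2}$ directly (as in the continuity step of the proof of \Cref{thm:weakSolution}), whereas the paper works on the space of the second solution and introduces a single auxiliary measure $\widehat{\Qbb}$ whose density involves the difference of the two drifts. Your reduction via uniqueness in law from \Cref{lem:solutionAuxiliarySDE} is sound, since the Hölder estimate for $t\mapsto\Pbb^i_{X^i_t}$ follows from the boundedness of $b$ exactly as in the ``maps $E$ onto itself'' step.

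The genuine gap is the final inequality. For $k\in I_+$ the kernel estimate (the analogue of \eqref{eq:calcRegularCase}) does not produce $\int_0^t\Kcal(\mu^1_s,\mu^2_s)^2\,ds$ but the weighted integral $\int_0^t\Kcal(\mu^1_s,\mu^2_s)^2\,s^{1-2H_k}\,ds$ with $1-2H_k\in(-1,0)$, so the clean Volterra inequality $\Kcal(\mu^1_t,\mu^2_t)^2\le L\int_0^t\Kcal(\mu^1_s,\mu^2_s)^2\,ds$ you assert is not what the estimates deliver. The paper removes the singular weight with Hölder's inequality, choosing $1<p<\frac{1}{2\sup_{k\in I_+}H_k-1}$ (this is precisely where the extra hypothesis $\sup_{k\in I_+}H_k<1$ enters), and is then left with
\begin{align*}
\Kcal\bigl(\Pbb_{X_t},\widetilde{\Pbb}_{Y_t}\bigr) \lesssim \Bigl(\int_0^t\Kcal\bigl(\Pbb_{X_s},\widetilde{\Pbb}_{Y_s}\bigr)^{2q}ds\Bigr)^{\frac{1}{2q}}+\Bigl(\int_0^t\Kcal\bigl(\Pbb_{X_s},\widetilde{\Pbb}_{Y_s}\bigr)^{2q}ds\Bigr)^{\frac{1}{q}},
\end{align*}
which, after a case distinction on whether the integral exceeds $1$, requires besides the classical Grönwall lemma a nonlinear Grönwall-type inequality (Stachurska) to conclude. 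You would need to reproduce this, or an equivalent device exploiting the integrability of $s^{1-2H_k}$; and before any Grönwall argument applies you must also justify the continuity of $t\mapsto\Kcal(\mu^1_t,\mu^2_t)$, which the paper obtains from the relative compactness of $\lbrace\Pbb_{X_t}\rbrace_{t\in[0,T]}$ and $\lbrace\widetilde{\Pbb}_{Y_t}\rbrace_{t\in[0,T]}$.
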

\begin{proof}
	In this proof we proceed similar to \cite[Theorem 2.7]{Bauer_StrongSolutionsOfMFSDEs}. Let $(\Omega, \Fcal,\Fbb, \Pbb, \Bb, X)$ and $(\widetilde{\Omega}, \widetilde{\Fcal},\widetilde{\Fbb}, \widetilde{\Pbb}, \widetilde{\Bb}, Y)$ be two weak solutions of MKV equation \eqref{eq:mfsde} such that $X_0 = Y_0 = x \in \Hcal$. For the sake of readability we assume $x$ to be the Null element in $\Hcal$ whereas the general case can be shown analogously. Furthermore we denote by $B^\Hbb$ and $\widetilde{B}^\Hbb$ the cylindrical fractional Brownian motions related to $\Bb$ and $\widetilde{\Bb}$, respectively. Lastly, we denote by $\lbrace W^{(k)} \rbrace_{k\geq 1}$ and $\lbrace \widetilde{W}^{(k)} \rbrace_{k\geq 1}$ the generating sequences of Brownian motions of $B^\Hbb$ and $\widetilde{B}^\Hbb$, respectively. \par
	Due to the proof of \Cref{thm:MFGirsanov} and \Cref{thm:weakSolution} there exist probability measures $\Qbb$ and $\widetilde{\Qbb}$ such that $X$ and $Y$ are weighted cylindrical fractional Brownian motions of the form \eqref{eq:qCylindricalFBm} under  $\Qbb$ and $\widetilde{\Qbb}$, respectively. Furthermore, we define the probability measure $\widehat{\Qbb} \approx \widetilde{\Pbb}$ by
	\begin{align*}
		\frac{d \widehat{\Qbb}}{d\widetilde{\Pbb}} &:= \exp\left\lbrace -\sum_{k\geq 1} \int_0^t \lambda_k^{-1} \Kfrak_{H_k}^{-1} \left( \int_0^{\cdot} b_k\left(u,Y_u, \widetilde{\Pbb}_{Y_u}\right) - b_k\left(u,Y_u,\Pbb_{X_u}\right) du \right) (s) d\widetilde{W}_s^{(k)} \right.\\
		&\quad \left. \quad -\frac{1}{2} \sum_{k\geq 1} \int_0^t \lambda_k^{-2} \Kfrak_{H_k}^{-1}\left(\int_0^{\cdot} b_k \left(u,Y_u, \widetilde{\Pbb}_{Y_u}\right) - b_k \left(u,Y_u,\Pbb_{X_u}\right) du \right)^2(s) ds\right\rbrace,
	\end{align*}
	and the $\widehat{\Qbb}$ cylindrical fractional Brownian motion
	\begin{align*}
		\widehat{B}_t^\Hbb := \widetilde{B}_t^\Hbb + \int_0^t \sqrt{Q}^{-1} \left( b \left( s, Y_s, \widetilde{\Pbb}_{Y_s} \right) - b\left( s, Y_s, \Pbb_{X_s} \right) \right) ds, \quad t\in [0,T].
	\end{align*}
	Note that we can find a measurable function $\Phi: [0,T] \times \Ccal([0,T]; \Hcal) \to \Hcal$ such that 
	\begin{align*}
		B_t^\Hbb = \Phi_t(X) \quad \text{ and } \quad \widehat{B}_t^\Hbb = \Phi_t(Y),
	\end{align*}
	since
	\begin{align*}
		B_t^\Hbb &= \sqrt{Q}^{-1} \left( X_t - \int_0^t b\left( s,X_s, \Pbb_{X_s} \right) ds \right), \text{ and}\\
		\widehat{B}_t^\Hbb &= \sqrt{Q}^{-1} \left( Y_t - \int_0^t b\left( s,Y_s, \Pbb_{X_s} \right) ds \right).
	\end{align*}
	Consequently,
	\begin{align*}
		\Ebb_{\Pbb} \left[F(B^\Hbb, X) \right] &= \Ebb_{\Qbb} \left[\Ecal\left(\int_0^T \sqrt{Q}^{-1} b\left(t, X_t, \Pbb_{X_t}\right) dX_t \right) F(\Phi(X), X) \right] \\
		&= \Ebb_{\widetilde{\Qbb}} \left[\Ecal\left(\int_0^T \sqrt{Q}^{-1} b\left(t, Y_t, \Pbb_{X_t}\right) dY_t \right) F(\Phi(Y), Y) \right] \\
		&= \Ebb_{\widehat{\Qbb}} \left[F(\widehat{B}^\Hbb, Y) \right],
	\end{align*}
		for every bounded measurable functional $F: \Ccal([0,T]; \Hcal) \times \Ccal([0,T]; \Hcal) \to \Rbb$ and thus $\Pbb_{(B^\Hbb, X)} = \widehat{\Qbb}_{(\widehat{B}^\Hbb, Y)}$. Therefore it is left to show that $\sup_{t \in [0,T]} \Kcal \left( \widehat{\Qbb}_{Y_t}, \widetilde{\Pbb}_{Y_t} \right) = 0$ from which we can conclude that $\frac{d\widehat{\Qbb}}{d \widetilde{\Pbb}}=1$ and in particular that $\Pbb_X = \widetilde{\Pbb}_Y$. \par
	Applying a measure change, inequality \eqref{eq:exponentialInequality}, Burkholder-Davis-Gundy's inequality, and assumption \eqref{eq:lipschitzUniformly}, yield
	\begin{align*}
		\Kcal&\left(\Pbb_{X_t},\widetilde{\Pbb}_{Y_t}\right) = \sup_{h\in\BL(\Hcal; \Rbb)} \left| \Ebb_{\widehat{\Qbb}} \left[h(Y_t)-h(0) \right] - \Ebb_{\widetilde{\Pbb}}\left[ h(Y_t)-h(0)\right] \right| \\
		&\leq \sup_{h\in\BL(\Hcal;\Rbb)} \Ebb_{\widetilde{\Pbb}} \left[\left|\frac{d \widehat{\Qbb}}{d\widetilde{\Pbb}} - 1\right| \left| h\left(Y_t\right)-h(0)\right|\right] \\
		&\leq \Ebb_{\widetilde{\Pbb}} \left[\left|\frac{d \widehat{\Qbb}}{d\widetilde{\Pbb}} - 1\right|^2\right]^{\frac{1}{2}} \Ebb_{\widetilde{\Qbb}}\left[ \left( \frac{d\widetilde{\Pbb}}{d \widetilde{\Qbb}} \right)^2\right]^{\frac{1}{4}} \Ebb_{\widetilde{\Qbb}}\left[ \left\Vert \widetilde{B}_t^\Hbb \right\Vert_{\Hcal}^{4}\right]^{\frac{1}{4}} \\
		& \lesssim \Ebb \left[\left| \sum_{k\geq 1} \int_0^t \lambda_k^{-2} \Kfrak_{H_k}^{-1} \left( \int_0^{\cdot} b_k \left(u,\Bb_u, \widetilde{\Pbb}_{Y_u}\right) - b_k \left(u,\Bb_u,\Pbb_{X_u}\right) du \right)^2 (s) ds \right\vert^2 \right]^{\frac{1}{4}}\\
		&\quad + \Ep{\sum_{k\geq 1} \int_0^t \lambda_k^{-2} \Kfrak_{H_k}^{-1}\left(\int_0^{\cdot} b_k \left(u,\Bb_u, \widetilde{\Pbb}_{Y_u}\right) - b_k \left(u,\Bb_u,\Pbb_{X_u}\right) du \right)^2(s) ds}{4} =:  A.
	\end{align*}
	Consider first the Brownian case $k \in I_0$. Then, we get
	\begin{align*}
		&\int_0^t \Kfrak_{H_k}^{-1} \left( \int_0^{\cdot} b_k \left(u,\Bb_u, \widetilde{\Pbb}_{Y_u}\right) - b_k \left(u,\Bb_u,\Pbb_{X_u}\right) du \right)^2 (s) ds \leq C_k^2 \lambda_k^2 \int_0^t \Kcal(\Pbb_{X_s}, \widetilde{\Pbb}_{Y_s})^2 ds.
	\end{align*}		
	In the singular case $k\in I_-$, we have
	\begin{align*}
		&\int_0^t \Kfrak_{H_k}^{-1} \left( \int_0^{\cdot} b_k \left(u,\Bb_u, \widetilde{\Pbb}_{Y_u}\right) - b_k \left(u,\Bb_u,\Pbb_{X_u}\right) du \right)^2 (s) ds\\
		&\quad \leq \frac{C_k^2\lambda_k^2}{\Gamma\left(\frac{1}{2}-H_k\right)^2} \int_0^t s^{2H_k-1} \Kcal(\Pbb_{X_s}, \widetilde{\Pbb}_{Y_s})^2 \left(\int_0^s (s-u)^{-H_k-\frac{1}{2}} u^{\frac{1}{2}-H_k} du \right)^2 ds \\
		&\quad \leq \frac{C_k^2 \lambda_k^2}{\Gamma\left(\frac{1}{2}-H_k\right)^2} \int_0^t s^{1-2H_k} \Kcal(\Pbb_{X_s}, \widetilde{\Pbb}_{Y_s})^2 \beta\left( \frac{3}{2} - H_k, \frac{1}{2} - H_k \right)^2 ds \\
		&\quad \leq \frac{C_k^2 \lambda_k^2 T^{1-2H_k} \Gamma\left(\frac{3}{2}- H_k \right)^2}{\Gamma\left(2-2H_k\right)^2} \int_0^t \Kcal(\Pbb_{X_s}, \widetilde{\Pbb}_{Y_s})^2  ds \\
		&\quad \lesssim C_k^2 \lambda_k^2 \int_0^t \Kcal(\Pbb_{X_s}, \widetilde{\Pbb}_{Y_s})^2  ds.
	\end{align*}
	Lastly we get in the regular case $k \in I_+$ equivalent to \eqref{eq:calcRegularCase} that
	\begin{align*}
		&\int_0^t \Kfrak_{H_k}^{-1} \left( \int_0^{\cdot} b_k \left(u,\Bb_u, \widetilde{\Pbb}_{Y_u}\right) - b_k \left(u,\Bb_u,\Pbb_{X_u}\right) du \right)^2 (s) ds  \\
		&\quad \lesssim C_k^2 \lambda_k^2 \int_0^t \Kcal\left(\Pbb_{X_s}, \widetilde{\Pbb}_{Y_s} \right)^2 s^{1-2H_k} ds.
	\end{align*}
	
	Using Hölder's inequality with $1<p< \frac{1}{2 \sup_{k\in I_+} H_k -1}$ and its conjugate $q>1$ yields
	
	\begin{align*}
		&\int_0^t \Kcal\left(\Pbb_{X_s}, \widetilde{\Pbb}_{Y_s} \right)^2 s^{1-2H_k} ds \\
		&\quad \leq \left( \int_0^t \Kcal\left(\Pbb_{X_s}, \widetilde{\Pbb}_{Y_s} \right)^{2q} ds \right)^{\frac{1}{q}} \left( \int_0^t  s^{p(1-2H_k)} ds \right)^{\frac{1}{p}} \\
		&\quad \leq \left( \int_0^t \Kcal\left(\Pbb_{X_s}, \widetilde{\Pbb}_{Y_s} \right)^{2q} ds \right)^{\frac{1}{q}} \left( \frac{1}{p(1-2 H_k) + 1} t^{p(1-2H_k) + 1} \right)^{\frac{1}{p}} \\
		&\quad \lesssim \left( \int_0^t \Kcal\left(\Pbb_{X_s}, \widetilde{\Pbb}_{Y_s} \right)^{2q} ds \right)^{\frac{1}{q}}.
	\end{align*}

	Consequently,
	\begin{small}
	\begin{align*}
		\Kcal(\Pbb_{X_t}, \widetilde{\Pbb}_{Y_t}) &\lesssim \left( \sum_{k\geq 1} C_k^2 \left( \int_0^t \Kcal \left(\Pbb_{X_s}, \widetilde{\Pbb}_{Y_s}\right)^{2q} ds \right)^{\frac{1}{q}} \right)^{\frac{1}{2}} + \sum_{k\geq 1} C_k^2 \left( \int_0^t \Kcal\left(\Pbb_{X_s}, \widetilde{\Pbb}_{Y_s}\right)^{2q} ds \right)^{\frac{1}{q}}\\
		&\lesssim \left( \int_0^t \Kcal\left(\Pbb_{X_s}, \widetilde{\Pbb}_{Y_s}\right)^{2q} ds \right)^{\frac{1}{2q}} + \left( \int_0^t \Kcal\left(\Pbb_{X_s},\widetilde{\Pbb}_{Y_s} \right)^{2q} ds \right)^{\frac{1}{q}}.
	\end{align*}
	\end{small}
	
	Assume $\int_0^t \Kcal(\Pbb_{X_s}, \widetilde{\Pbb}_{Y_s})^{2q} ds \geq 1$. Then, 
	\begin{align*}
		\Kcal\left(\Pbb_{X_t}, \widetilde{\Pbb}_{Y_t}\right)^q \lesssim \int_0^t \Kcal\left(\Pbb_{X_s}, \widetilde{\Pbb}_{Y_s}\right)^{2q} ds.
	\end{align*}
	In the case $0\leq \int_0^t \Kcal(\Pbb_{X_s}, \widetilde{\Pbb}_{Y_s})^{2q} ds < 1$, we get
	\begin{align*}
		\Kcal\left(\Pbb_{X_t}, \widetilde{\Pbb}_{Y_t}\right)^{2q} \lesssim \int_0^t \Kcal\left(\Pbb_{X_s}, \widetilde{\Pbb}_{Y_s}\right)^{2q} ds.
	\end{align*}
	Next we show that $t \mapsto \Kcal\left(\Pbb_{X_t}, \widetilde{\Pbb}_{Y_t}\right)$ is continuous. Since $t \mapsto X_t$ and $t \mapsto Y_t$ are almost surely continuous, we immediately get that $t \mapsto \Pbb_{X_t}$ and $t \mapsto \widetilde{\Pbb}_{Y_t}$ are weakly continuous. Furthermore, it can be shown as in the proof of \Cref{thm:weakSolution} that $\left\lbrace \Pbb_{X_t}: t\in [0,T] \right\rbrace$ and $\left\lbrace \widetilde{\Pbb}_{Y_t}: t\in [0,T] \right\rbrace$ are relatively compact with respect to the Kantorovich-Rubinstein metric and consequently, that $t \mapsto \Kcal\left(\Pbb_{X_t}, \widetilde{\Pbb}_{Y_t}\right)$ is continuous. Hence, using Grönwall's inequality in the first case and a non-linear Grönwall type inequality by Stachurska \cite[Theorem 25]{dragomir2003some} in the second, yields $\Kcal\left(\Pbb_{X_u}, \widetilde{\Pbb}_{Y_t}\right) = 0$ for all $t\in[0,T]$ and thus the proof is complete.	
\end{proof}

\section{Strong Solutions and Pathwise Uniqueness}\label{sec:strongSolution}

	In this section we examine under which assumptions MKV equation \eqref{eq:mfsde} has a pathwisely unique strong solution. Therefore, we first recall the definitions of a strong solution and pathwise uniqueness.

	\begin{definition}\label{def:MFStrS}
	A \emph{strong solution} of MKV equation \eqref{eq:mfsde} is a weak solution $(\Omega, \Fcal, \Fbb^\Bb, \Pbb, \Bb, X)$ where $\Fbb^\Bb$ is the filtration generated by the weighted cylindrical fractional Brownian motion $\Bb$ and augmented with the $\Pbb$-null sets.
\end{definition}

\begin{definition}
	We say a weak solution $(\Omega, \Fcal, \Fbb, \Pbb, \Bb, X)$ of MKV equation \eqref{eq:mfsde} is \emph{pathwisely unique}, if for any other weak solution $(\Omega, \Fcal, \Fbb, \Pbb, \Bb, Y)$ on the same stochastic basis with the same initial condition $X_0 = Y_0$,
	\begin{align*}
		\Pbb\left( \forall t\geq 0: X_t = Y_t \right) = 1.
	\end{align*}
\end{definition}

\begin{remark}
	In the following we speak of a unique solution, if the solution is unique in law and pathwisely unique.
\end{remark}

	Provided that a weak solution of MKV equation \eqref{eq:mfsde} exists, the task of proving the existence of a strong solution becomes a problem in the field of SDEs. More precisely, the difference between a weak and a strong solution lies in the measurability with respect to the filtration of the driving noise. Since the dependence on the law is mere deterministic, it does not effect adaptedness of the solution. Therefore, the SDE
	\begin{align}\label{eq:ddSDE}
		Y_t = Y_0 + \int_0^t b^{\Pbb_X}(s,Y_s) dt + \Bb_t, \quad t\in [0,T],
	\end{align}
	can be considered, where $b^{\Pbb_X}(s,y) = b\left(s,y, \Pbb_{X_s}\right)$ and $(X_s)_{s\in[0,T]}$ is a weak solution of MKV equation \eqref{eq:mfsde}. For more details on this transition we refer the reader to \cite{Bauer_StrongSolutionsOfMFSDEs}. Subsequently we give a general result regarding strong solutions of MKV equation \eqref{eq:mfsde}.
	
	\begin{theorem}\label{thm:strong}
		Suppose the assumptions of \Cref{thm:weakSolution} are fulfilled and SDE \eqref{eq:ddSDE} has a unique strong solution $(Y_t)_{t\in [0,T]}$. Then, MKV equation \eqref{eq:mfsde} has a strong solution. More precisely, any weak solution $(X_t)_{t\in[0,T]}$ of MKV equation \eqref{eq:mfsde} is a strong solution. If in addition $\sup_{k\in I_+} H_k < 1$ and condition \eqref{eq:lipschitzUniformly} is fulfilled, the solution of MKV equation \eqref{eq:mfsde} is unique.
	\end{theorem}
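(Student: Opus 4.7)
The plan is to reduce the strong existence problem for MKV equation \eqref{eq:mfsde} to the assumed strong existence and pathwise uniqueness for the auxiliary SDE \eqref{eq:ddSDE}, in which the law of the solution appears only as a fixed deterministic input. First, \Cref{thm:weakSolution} furnishes a weak solution $(\Omega, \Fcal, \Fbb, \Pbb, \Bb, X)$ of \eqref{eq:mfsde}, and thus a specific continuous curve of marginals $t \mapsto \Pbb_{X_t}$. Freezing this curve in the drift, i.e.\ setting $b^{\Pbb_X}(s,y) := b(s,y,\Pbb_{X_s})$, the process $X$ is automatically a weak solution of the classical (non-distribution-dependent) SDE \eqref{eq:ddSDE}.

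Next, since SDE \eqref{eq:ddSDE} is assumed to admit a unique strong solution $Y$, I would invoke a Yamada--Watanabe type principle for cylindrical fractional Brownian motion (available in the references cited after \eqref{eq:SDEaux}, in particular \cite{banos2019infiniteSDE} and \cite{MeyerBrandisProske_ConstructionOfStrongSolutionsOfSDEs}): weak existence together with pathwise uniqueness forces every weak solution of \eqref{eq:ddSDE} to be strong. Consequently $X$ coincides almost surely with $Y$, and in particular $X$ is $\Fbb^\Bb$-adapted, so that $(\Omega, \Fcal, \Fbb^\Bb, \Pbb, \Bb, X)$ is a strong solution of MKV equation \eqref{eq:mfsde}. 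This establishes the first two assertions of the theorem.

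For the uniqueness part, the additional hypotheses $\sup_{k \in I_+} H_k < 1$ and \eqref{eq:lipschitzUniformly} put us in the framework of \Cref{thm:weakUniqueness}, which gives uniqueness in law for MKV equation \eqref{eq:mfsde}. Hence the frozen-measure drift $b^{\Pbb_X}$ is the same regardless of which weak solution is used to define it, and SDE \eqref{eq:ddSDE} becomes a single, canonically associated classical SDE. The assumed pathwise uniqueness for that SDE then transfers directly to \eqref{eq:mfsde}: any two weak solutions $X, \widetilde X$ on the same stochastic basis with the same initial condition satisfy one and the same SDE \eqref{eq:ddSDE} driven by the same noise, and therefore coincide $\Pbb$-a.s.

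The main obstacle is conceptual rather than computational: one must justify the Yamada--Watanabe step in an infinite-dimensional setting driven by a non-semimartingale noise. This is, however, already packaged into the phrase \emph{unique strong solution} of \eqref{eq:ddSDE} and the corresponding results cited around \eqref{eq:SDEaux}, which combine strong existence and pathwise uniqueness for SDEs driven by cylindrical fractional Brownian motion. No new estimates beyond those already derived in \Cref{lem:solutionAuxiliarySDE} and \Cref{thm:weakUniqueness} are required.
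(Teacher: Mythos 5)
Your proposal is correct and follows essentially the same route as the paper: reduce to the frozen-measure SDE \eqref{eq:ddSDE}, use its assumed unique strong solution, and for uniqueness invoke \Cref{thm:weakUniqueness} to make the associated SDE canonical so that pathwise uniqueness transfers. The only cosmetic difference is that you phrase the adaptedness step as a Yamada--Watanabe argument showing $X$ itself is $\Fbb^\Bb$-adapted, whereas the paper identifies the laws of $X$ and $Y$ and carries the strong solution $Y$ of \eqref{eq:ddSDE} back to \eqref{eq:mfsde}; both rest on the same reading of ``unique strong solution'' of \eqref{eq:ddSDE}.
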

	\begin{proof}
		Due to \Cref{thm:weakSolution} there exists a weak solution $X$ of MKV equation \eqref{eq:mfsde}. Moreover, $X$ can be seen as a weak solution of the associated SDE \eqref{eq:ddSDE}. Since SDE \eqref{eq:ddSDE} has a unique strong solution $Y$, i.e. in particular $Y$ is a weak solution which is unique in law, we have that $X$ and $Y$ have the same law. Thus, equations \eqref{eq:mfsde} and \eqref{eq:ddSDE} coincide and $Y$ is a strong solution of MKV equation \eqref{eq:mfsde}. \par
		Under the additional assumptions $\sup_{k\in I_+} H_k < 1$ and condition \eqref{eq:lipschitzUniformly}, we know by \Cref{thm:weakUniqueness} that the  weak solution $X$ of MKV equation \eqref{eq:mfsde} is unique in law. Consequently, there exists a unique associated SDE \eqref{eq:ddSDE}, which has by assumption the unique strong solution $Y$. In particular, $Y$ is also a strong solution of MKV equation \eqref{eq:mfsde} due to the first part. Since the associated SDE is uniquely determined, the pathwise uniqueness of a solution to SDE \eqref{eq:ddSDE} transfers to the solution of MKV equation \eqref{eq:mfsde}. Thus, $Y$ is the unique strong solution of MKV equation \eqref{eq:mfsde}.
	\end{proof}
	
	In the following we link \Cref{thm:strong} to results in the literature on the existence of strong solutions of SDEs. We start with a corollary in the infinite-dimensional case applying the result of \cite{banos2019infiniteSDE}. Subsequently, we consider the finite-dimensional case applying the result of \cite{nualart2002regularization}.
	
	\begin{corollary}
		Assume $I_0 \cup I_+ = \emptyset$, $\sum_{k\in I_-} H_k < \frac{1}{6}$, and $\sup_{k\in I_-} H_k < \frac{1}{12}$. Let $b:[0,T]\times \Hcal \times \Pcal_1(\Hcal) \to \Hcal$ be a measurable function fulfilling the Lipschitz condition \eqref{eq:lipschitzUniformly} and for which there exist sequences $C \in \ell^1$ and $D \in \ell^1$ such that for every $k\geq 1$ \vspace{0.2cm}
\begin{align*}
	\sup_{y\in \Hcal} \sup_{t\in [0,T]} \vert b_k( t, y, \mu ) \vert &\leq C_k \lambda_k, \text{ and} \\ 
	\sup_{d\geq 1} \int_{\Rbb^d} \sup_{t\in [0,T]} \vert b_k\left( t, \sqrt{Q} \sqrt{\Kcal} \tau^{-1} y, \mu \right) \vert dy &\leq D_k \lambda_k,
\end{align*}
	where $y=(y_1, \dots, y_d)$ and $\Kcal: \Hcal \to \Hcal$ is defined by
	\begin{align*}
		\Kcal x = \sum_{k\geq 1} \kfrak_{H_k} x^{(k)} e_k, ~x \in \Hcal,
	\end{align*}
	for $\lbrace \kfrak_{H_k} \rbrace_{k\geq 1}$ being the local non-determinism constant of $\lbrace B^{H_k} \rbrace_{k\geq 1}$, i.e. a constant merely dependent on $H$ such that for every $t\in [0,T]$ and $0 < r \leq t$
	\begin{align*}
		\Var \left( B_{t}^{H}\left\vert B_{s}^{H}: \left\vert t-s\right\vert \geq r \right. \right) \geq \kfrak_{H} r^{2H}.
	\end{align*}	
	Then, MKV equation \eqref{eq:mfsde} has a Malliavin differentiable unique strong solution.
	\end{corollary}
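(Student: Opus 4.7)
The plan is to apply \Cref{thm:strong} to the frozen-law SDE \eqref{eq:ddSDE}, whose unique strong solution will be supplied by the infinite-dimensional strong-existence and Malliavin-regularity theorem of \cite{banos2019infiniteSDE}. Since $I_0 \cup I_+ = \emptyset$, the regular-case Hölder hypothesis \eqref{eq:HoelderAssumptionb} is vacuous; the condition $C/\sqrt{1-\Hbb} \in \ell^1$ reduces to $C \in \ell^1$ via $\sup_{k} H_k < 1/12$; and the Lipschitz bound \eqref{eq:lipschitzUniformly} in the statement implies the uniform continuity condition \eqref{eq:continuousUniformly} and is the hypothesis used in \Cref{thm:weakUniqueness}. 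Thus \Cref{thm:weakSolution} and \Cref{thm:weakUniqueness} (equivalently, \Cref{cor:solutionAuxiliarySDE} together with the Schauder argument in the purely singular regime) provide a weak solution $X$ of the MKV equation \eqref{eq:mfsde} that is unique in law.

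With such an $X$ fixed I pass to the SDE \eqref{eq:ddSDE} with drift $b^{\Pbb_X}(t,y) := b(t, y, \Pbb_{X_t})$. Because the constants $C_k$ and $D_k$ in the hypotheses of the corollary are independent of the measure argument, both the sup-norm bound and the integrability bound against the operator $\sqrt{Q}\sqrt{\Kcal}\tau^{-1}$ descend verbatim to $b^{\Pbb_X}$. Combined with the sequence-level conditions $\sum_{k \in I_-} H_k < 1/6$ and $\sup_{k \in I_-} H_k < 1/12$, these are precisely the hypotheses of the main theorem in \cite{banos2019infiniteSDE}, which accordingly produces a pathwise unique Malliavin differentiable strong solution $Y$ of \eqref{eq:ddSDE}.

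The argument is then closed by \Cref{thm:strong}: the weak solution $X$ of \eqref{eq:mfsde} is simultaneously a weak solution of \eqref{eq:ddSDE}, so by weak uniqueness $X$ and $Y$ share the same law; hence the frozen equation coincides with the original MKV equation and $Y$ is at once the strong and law-unique solution of \eqref{eq:mfsde}. Malliavin differentiability passes along this identification, which gives the claim. The one step requiring care is the descent of the sup-norm and integrability bounds to the frozen drift $b^{\Pbb_X}$ and the verification that the resulting drift lies in the function class treated in \cite{banos2019infiniteSDE}; both are, however, immediate from the $\mu$-independence of $C$ and $D$, so no new estimate is required beyond those already established in the weak-solution argument and in the cited strong-existence literature.
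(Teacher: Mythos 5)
Your proposal is correct and follows exactly the paper's route: the paper's proof is the one-line observation that the corollary is an immediate consequence of \Cref{thm:strong} together with \cite[Theorem 4.11]{banos2019infiniteSDE}, and your write-up simply unpacks that reduction (weak existence and uniqueness in law from \Cref{thm:weakSolution} and \Cref{thm:weakUniqueness} in the purely singular regime, strong existence, pathwise uniqueness, and Malliavin differentiability for the frozen-law SDE \eqref{eq:ddSDE} from the cited theorem, then transfer via \Cref{thm:strong}). No discrepancy to report.
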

	\begin{proof}
		The result is an immediate consequence of \Cref{thm:strong} and \cite[Theorem 4.11]{banos2019infiniteSDE}.
	\end{proof}
	
	Consider now the one-dimensional real-valued MKV equation 
	\begin{align}\label{eq:McKddim}
		X_t = x + \int_0^t b \left( s, X_s, \Pbb_{X_s} \right) ds + B_t^H, \quad t \in [0,T],
	\end{align}
	where $b: [0,T] \times \Rbb \times \Pcal_1(\Rbb) \to \Rbb$ and $B_t^H$ one-dimensional fractional Brownian motion with Hurst parameter $H$.
	
	\begin{corollary}
		Let $b: [0,T] \times \Rbb \times \Pcal_1(\Rbb) \to \Rbb$ be a bounded measurable function. If $H> 1/2$ suppose that
		\begin{align*}
			\vert b (t,x,\mu) - b (s,y,\nu) \vert \leq C \left( \vert t-s \vert^{\gamma} + \vert x-y \vert^{\alpha} + \Kcal(\mu, \nu)^{\beta} \right),
		\end{align*}
		where $C>0$, $\gamma > H - \frac{1}{2}$, $2 \geq \alpha > 2H - 1$, and $\beta > H - \frac{1}{2}$, and if $H \leq 1/2$ suppose that for every $\mu \in \Ccal([0,T];\Pcal_1(\Rbb))$ and every $\varepsilon>0$ there exists $\delta>0$ such that for all $\nu \in \Ccal([0,T];\Pcal_1(\Rbb))$
		\begin{align*}
			\sup_{t\in [0,T]} \Kcal(\mu_t,\nu_t) < \delta ~\Rightarrow \sup_{t\in[0,T], ~ y\in \Hcal}\left\vert b (t,y,\mu_t) - b (t,y,\nu_t) \right\vert < \varepsilon.
		\end{align*}
		Then, MKV equation \eqref{eq:McKddim} has a strong solution. If in addition condition \eqref{eq:lipschitzUniformly} is fulfilled, the solution is unique.
	\end{corollary}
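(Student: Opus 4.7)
The plan is to reduce the corollary to a direct application of Theorem \ref{thm:strong} in the one-dimensional setting. I take $\Hcal = \Rbb$, a trivial orthonormal basis, $\lambda_1 = 1$, and a single Hurst parameter $H \in (0,1)$. Under this identification, the boundedness of $b$ furnishes a constant $C_1 = \|b\|_\infty$, and the condition $C/\sqrt{1-\Hbb}\in \ell^1$ becomes trivially true (a single term). The Gaussian scaling estimate $\lambda_1 |t-s|^H \leq \rho|t-s|^\kappa$ holds with $\kappa = H$. The Hölder hypothesis in the case $H > 1/2$ is exactly \eqref{eq:HoelderAssumptionb} with suitably chosen constants, and the modulus-of-continuity hypothesis in the case $H \leq 1/2$ is exactly \eqref{eq:continuousUniformly}. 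Hence Theorem \ref{thm:weakSolution} applies and yields a weak solution $X$ of \eqref{eq:McKddim}.

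Next, I freeze the measure flow along this weak solution to form the associated SDE
\begin{align*}
Y_t = x + \int_0^t b^{\Pbb_X}(s, Y_s)\,ds + B_t^H, \quad t\in [0,T],
\end{align*}
with $b^{\Pbb_X}(s,y) := b(s, y, \Pbb_{X_s})$. This drift is bounded and measurable in all cases. In the regular case $H > 1/2$, it additionally inherits joint $(\gamma', \alpha)$-Hölder regularity in $(t,x)$: the Hölder assumption on $b$ combined with the estimate $\Kcal(\Pbb_{X_t}, \Pbb_{X_s}) \lesssim |t-s|^\kappa$ established inside the proof of Theorem \ref{thm:weakSolution} absorbs the law variable. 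Hence $b^{\Pbb_X}$ falls into the regularity class for which the one-dimensional result of Nualart-Ouknine \cite{nualart2002regularization} guarantees the existence of a pathwisely unique strong solution $Y$ of the frozen SDE; in the singular/Brownian case $H\leq 1/2$, mere boundedness of $b^{\Pbb_X}$ already suffices for that theorem.

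At this point Theorem \ref{thm:strong} applies directly: the existence of a weak MKV solution together with the existence of a unique strong solution to the frozen SDE upgrades any weak solution of \eqref{eq:McKddim} to a strong one. For the second assertion, the additional Lipschitz condition \eqref{eq:lipschitzUniformly} places us in the hypotheses of Theorem \ref{thm:weakUniqueness}, which yields uniqueness in law; pathwise uniqueness then transfers from the frozen SDE because the associated drift $b^{\Pbb_X}$ is then uniquely determined.

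The main obstacle is the verification, in the regular case $H > 1/2$, that $b^{\Pbb_X}$ satisfies the precise joint time-space Hölder hypothesis required by \cite{nualart2002regularization}. This amounts to showing $t\mapsto \Pbb_{X_t}$ is $\kappa$-Hölder in the Kantorovich-Rubinstein metric with $\kappa \geq \gamma$, which is already implicit in the fixed-point argument of Theorem \ref{thm:weakSolution} (the set $E = \Ccal^\kappa([0,T];\Pcal_1(\Hcal))$ is stable under $\psi$), so no new work is needed beyond citing that intermediate estimate. Everything else is a bookkeeping exercise of translating the abstract hypotheses into their one-dimensional form.
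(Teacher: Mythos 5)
Your proposal follows exactly the route the paper takes: the paper's proof of this corollary is a one-line reduction to \cite{nualart2002regularization} combined with \Cref{thm:weakSolution} and \Cref{thm:weakUniqueness}, and you have simply spelled out the specialization to $\Hcal=\Rbb$, $\lambda_1=1$, together with the passage through the frozen SDE \eqref{eq:ddSDE}, which is precisely the mechanism of \Cref{thm:strong}. The one point worth flagging is the exponent bookkeeping in the regular case, which you gloss over with ``suitably chosen constants'': \Cref{thm:weakSolution} actually requires $\kappa\alpha>2H-1$ and $\kappa\beta>H-\frac{1}{2}$ with the best admissible $\kappa=H<1$ (i.e.\ $\alpha>2-\frac{1}{H}$ and $\beta>1-\frac{1}{2H}$, both strictly stronger than the corollary's stated $\alpha>2H-1$ and $\beta>H-\frac{1}{2}$) --- but this discrepancy is inherited from the paper's own statement of the corollary rather than introduced by your argument.
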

	\begin{proof}
		This result is a direct consequence of \cite{nualart2002regularization} together with \Cref{thm:weakSolution} and \Cref{thm:weakUniqueness}, respectively.
	\end{proof}

\appendix
\begin{footnotesize}
	\bibliography{literatureTH}
	\bibliographystyle{abbrv}
\end{footnotesize}
\bigskip
\rule{\textwidth}{1pt}

\end{document}